\documentclass[reqno]{amsart}

\usepackage{amsmath}
\usepackage{mathtools}
\usepackage{amssymb}
\usepackage{ascmac}

\usepackage{mathrsfs}

\usepackage{hyperref}
\hypersetup{
 setpagesize=false,
 hypertexnames=false,
 hyperfootnotes=true,
 bookmarksnumbered=true,%
 bookmarksopen=true,%
 colorlinks=true,%
 linkcolor=blue,
 citecolor=blue,
}

\theoremstyle{plain}
	\newtheorem{theorem}{Theorem}[section]
	\newtheorem{lemma}[theorem]{Lemma}
	\newtheorem{proposition}[theorem]{Proposition}
	
	\newtheorem*{theorem*}{Theorem}
\theoremstyle{definition}
	
	\newtheorem{remark}{Remark}[section]

	\newtheorem*{acknowledgement}{Acknowledgement}
	\newtheorem*{dataavailability}{Data availability}
	\newtheorem*{conflict}{Conflict of Interest}
\theoremstyle{remark}
	\newtheorem*{notation}{Notation}
	
\usepackage[shortlabels]{enumitem}

\allowdisplaybreaks[4]

\numberwithin{equation}{section}

\DeclareMathOperator{\re}{Re}
\newcommand{\semicolon}{\mathrel{;}}

\begin{document}

\title[Semi-vortex solutions for NLS with SOC]{Existence of small semi-vortex solutions for the cubic nonlinear Schr\"{o}dinger system with Rashba type spin-orbit coupling on $\mathbb{R}^2$}
\author[T. Inui]{Takahisa Inui}
\address[T. Inui]{Department of Mathematics, Graduate School of Science, the University of Osaka, Toyonaka, Osaka, Japan 560-0043.}
\email{inui@math.sci.osaka-u.ac.jp}
\date{\today}
\keywords{nonlinear Schr\"{o}dinger system, spin-orbit, ground state, semi-vortex}
\subjclass[2020]{35Q55, 35J20}

\begin{abstract}
We consider the cubic nonlinear Schr\"{o}dinger system with Rashba type Spin-Orbit Coupling (SOC) on $\mathbb{R}^2$. 
The system describes SO-coupled spinor BEC in physics. In the literature of physics, the small semi-vortex solutions, small ground state, and the so-called mixed mode, which are mixture of semi-vortex solutions, are investigated. The semi-vortex solutions cause from the resonance on the essential spectrum of the linear operator. In the present paper, we give mathematical proofs of the existence of the semi-vortex and the ground state by finding minimizers of the energy under small mass constraint based on concentration compactness argument. Moreover, we also discuss the mixed modes in the case where all the coefficients of the nonlinear terms are equal.
\end{abstract}

\maketitle

\tableofcontents


\section{Introduction}

\subsection{Motivation}

We consider the following nonlinear Schr\"{o}dinger system (equivalently, Gross--Pitaevskii equation). 
\begin{equation}
\label{NLSSO}
\tag{NLSSO}
	\begin{cases}
	i \partial_t \psi_+ + \frac{1}{2} \Delta \psi_+ = \nu D^{-} \psi_{-} - \lambda_+ |\psi_+|^2\psi_+ - \lambda_0|\psi_{-}|^2 \psi_+ , \\
	i \partial_t \psi_{-} + \frac{1}{2} \Delta \psi_{-} = - \nu  D^{+} \psi_{+} - \lambda_{-} |\psi_{-}|^2\psi_{-} - \lambda_0|\psi_{+}|^2 \psi_{-},
	\end{cases}
	(t,x,y) \in I \times \mathbb{R}^2. 
\end{equation}
Here, the differential operators $D^{\pm}$ are
\begin{equation*}
	D^{\pm} = \partial_x \pm i\partial_y
\end{equation*}
and $\Delta$ is the Laplacian, i.e $\Delta=\partial_x^2 + \partial_y^2=D^{+}D^{-}=D^{-}D^{+}$. Note that the adjoint operator of $D^{\pm}$ is $(D^{\pm})^* = - D^{\mp}$. 
The coefficient $\nu$ is a non-zero real number. 
The coefficients $\lambda_\pm, \lambda_0$ of the nonlinear terms are all positive, i.e. $\lambda_\pm, \lambda_0 >0$, which means that all the nonlinearities are focusing. The cubic nonlinearity means that the equation with $\nu=0$ is $L^2$-critical (or mass-critical). 
The interaction $\nu D^{-} \psi_{-}$ and $- \nu  D^{+} \psi_{+}$ are called Rashba type Spin-Orbit Coupling, which is also called SOC or SO for short. The equation \eqref{NLSSO} describes the wave functions of Bose--Einstein condensation with spin (or pseudo-spin) 1/2 and spin-orbit interaction in physics. See \cite{SLM14,DLCLLJKM24} and references therein for physical background. The equation can be also regarded as the Dirac equation with higher order dispersion.

It follows from the general existence theory \cite{OSY12} with \cite{Oga90} that the equation \eqref{NLSSO} is locally well-posed in the energy space $H^1(\mathbb{R}^2) \times H^1(\mathbb{R}^2)$. Moreover, the energy $E$ and the mass $M$, which are defined by 
\begin{align*}
	E(\psi_{+},\psi_{-}) &\coloneqq  
	\frac{1}{4} \int_{\mathbb{R}^2}  |D^{+} \psi_{+}|^2  + |D^{-} \psi_{-}|^2  dz \\
	&\quad +\frac{\nu}{2} \re \int_{\mathbb{R}^2} \overline{\psi_{+}}D^{-}\psi_{-} - \overline{\psi_{-}}D^{+}\psi_{+}  dz \\
	&\quad -\frac{1}{4}  \int_{\mathbb{R}^2} \lambda_{+}|\psi_{+}|^4 + \lambda_{-}|\psi_{-}|^4+2\lambda_{0}|\psi_{+}|^2|\psi_{-}|^2 dz, 
	\\
	M(\psi_{+},\psi_{-}) &\coloneqq  \|\psi_{+}\|_{L^2}^2 + \|\psi_{-}\|_{L^2}^2,
\end{align*}
are conserved, where we set $z=(x,y)$.  

The equation \eqref{NLSSO} is invariant under spatial and time translation. Moreover, it has gauge invariance. We note that \eqref{NLSSO} is not invariant under the psuedo-conformal transformation nor the Galilean transformation. 
We write \eqref{NLSSO} as \eqref{NLSSO}$_{\nu}$ when we would like to emphasize the dependence of $\nu$. 
Let $(\psi_{+}(t,z),\psi_{-}(t,z))$ be a solution to \eqref{NLSSO}$_{\nu}$. Then $(-\psi_{+}(t,z),\psi_{-}(t,z))$ and $(\psi_{+}(t,-z),\psi_{-}(t,-z))$ are solutions to \eqref{NLSSO}$_{-\nu}$. 
We assume $\nu>0$ in what follows by this symmetry. This also implies that $(-\psi_{+}(t,-z),\psi_{-}(t,-z))$ is a solution to \eqref{NLSSO}. 
More generally, if $(\psi_{+}(t,z),\psi_{-}(t,z))$ is a solution to \eqref{NLSSO}$_{\nu}$, then
\begin{equation*}
	\begin{pmatrix} e^{i\eta}\psi_{+}(t, \mathcal{R}_\eta z) \\ \psi_{-}(t,\mathcal{R}_\eta z) \end{pmatrix}
\end{equation*}
is a solution to \eqref{NLSSO}$_{\nu}$, where $\mathcal{R}_\eta$ is the rotational operator such as
\begin{equation*}
	\mathcal{R}_\eta  = \begin{pmatrix} \cos \eta & -\sin \eta \\ \sin \eta & \cos \eta \end{pmatrix}. 
\end{equation*}
Note that the Dirac equation has this symmetry as well. 
Moreover, when $\lambda_{+} = \lambda_{-}$ (which is the most interested case in physics), we have another symmetry. In the case, $(\overline{\psi_{-}(-t,z)},\overline{\psi_{+}(-t,z)})$ is a solution to \eqref{NLSSO}$_{-\nu}$ and thus $(-\overline{\psi_{-}(-t,z)},\overline{\psi_{+}(-t,z)})$ and $(\overline{\psi_{-}(-t,-z)},\overline{\psi_{+}(-t,-z)})$ are solutions to \eqref{NLSSO}. The symmetry is called mirror-image symmetry in physics. 
We may assume that $\nu=1$ by the $L^2$-invariant scaling $(\mu \psi_{+}(\mu^2 t, \mu z), \mu \psi_{-}(\mu^2 t, \mu z))$ ($\mu>0$). However, we keep it as $\nu$ in the paper. 

In the present paper, we are interested in the existence of small non-scattering global solutions of the equation \eqref{NLSSO}. 
Especially, we will show the existence of the solutions with the form $\psi_{+} = e^{i\omega t}e^{im\theta}\varphi_{+,m}(r)$, $\psi_{-} = e^{i\omega t}e^{i(m+1)\theta} \varphi_{-,m+1}(r)$ for $m \in \mathbb{Z}$, where $(r,\theta)$ is the polar coordinate of $(x,y)$. They are called semi-vortex solutions. In the physical paper \cite{SLM14}, the existence of small semi-vortex solutions is shown numerically. Our main aim in the present paper is to give the mathematical proof. 
Moreover, we also construct the small stable (energy) ground state. Let us note that semi-vortex solutions are not necessarily ground state in the whole energy space. 
Related to this point, we also discuss a so-called mixed mode, which is the summation of two semi-vortices, in a very special nonlinearity case. 

%

\subsection{Main results}

Substitute the semi-vortex form $\psi_{+} = e^{i\omega t}e^{im\theta}\varphi_{+,m}(r)$, $\psi_{-} = e^{i\omega t}e^{i(m+1)\theta} \varphi_{-,m+1}(r)$ for $m \in \mathbb{Z}$ into \eqref{NLSSO}. Let $\varphi_{+}=\varphi_{+,m}$ and $\varphi_{-} = \varphi_{-,m+1}$ for short. 
Since we have
\begin{equation*}
	\Delta = \partial_r^2 + \frac{1}{r} \partial_r + \frac{1}{r^2} \partial_\theta^2, \quad 
	D^{\pm} = e^{\pm i\theta} \left( \partial_r \pm \frac{i}{r} \partial_\theta \right),
\end{equation*}
we obtain the following elliptic  equation:
\begin{equation}
\tag{SE$_m$}
\label{ellip_m}
	\begin{cases}
	\frac{1}{2} \varphi_{+}'' + \frac{1}{2r}\varphi_{+}' - \frac{m^2}{2r^2} \varphi_{+} -\omega \varphi_{+} 
	= \nu  \left( \varphi_{-}' + \frac{m+1}{r}\varphi_{-}\right) - \lambda_{+}|\varphi_{+}|^2 \varphi_{+} - \lambda_0 |\varphi_{-}|^2 \varphi_{+}, \\
	\frac{1}{2} \varphi_{-}'' + \frac{1}{2r}\varphi_{-}' - \frac{(m+1)^2}{2r^2} \varphi_{-} -\omega \varphi_{-}
	= -\nu \left(\varphi_{+}' - \frac{m}{r} \varphi_{+}\right) - \lambda_{-}|\varphi_{-}|^2 \varphi_{-} - \lambda_0 |\varphi_{+}|^2 \varphi_{-}.
	\end{cases}
\end{equation}
The energy $E_{m}$ and the mass $M$ of the equation \eqref{ellip_m} are given by
\begin{align*}
	E_{m}(v_{+},v_{-})
	&\coloneqq  \frac{1}{4} \int_{0}^{\infty} \left( |v_{+}'|^2  + \frac{m^2}{r^2}|v_{+}|^2+ |v_{-}'|^2 + \frac{(m+1)^2}{r^2}|v_{-}|^2 \right)rdr \\
	&\quad + \frac{\nu}{2} \re \int_{0}^{\infty} \{  r^{-(m+1)} (r^{m+1}v_{-})' \overline{v_{+}}  -   r^{m} (r^{-m} v_{+})' \overline{v_{-}} \} rdr \\
	&\quad -\frac{1}{4} \int_{0}^{\infty} (\lambda_{+}|v_{+}|^4 + \lambda_{-}|v_{-}|^4 + \lambda_{0}|v_{+}|^2|v_{-}|^2) rdr, 
	\\
	M(v_{+},v_{-}) &
	=\int_{0}^{\infty} ( |v_{+}(r)|^2 + |v_{-}(r)|^2)rdr. 
\end{align*} 

To investigate the existence of the semi-vortices, we consider the minimizing of the energy $E_{m}$ under the mass constraint; 
\begin{equation*}
	\mathcal{E}_{m}(\rho)\coloneqq  \inf \{ E_{m}(v_{+},v_{-}) \semicolon (v_{+},v_{-}) \in \mathcal{H}_{m},  M(v_{+},v_{-}) = \rho \},
\end{equation*}
for $\rho>0$, where $\mathcal{H}_{m}$ is the function space such that $(e^{im\theta}v_{+}(r),e^{i(m+1)\theta} v_{-}(r)) \in H^1(\mathbb{R}^2) \times H^1(\mathbb{R}^2)$ (see the last of this section  for the precise definition). Once we obtain the minimizer, there exists $\omega$ such that 
\begin{equation*}
	E_{m}'(\varphi_{+,m},\varphi_{-,m+1}) = -\omega M'(\varphi_{+,m},\varphi_{-,m+1})
\end{equation*}
by the Lagrange multiplier and thus we get the solution of \eqref{ellip_m}. We note that $\omega$ should be at least positive because of the usual argument by the Nehari and Pohozaev functionals. 

Let $C_{\mathrm{GN}}$ be the best constant of the Gagliardo--Nirenberg inequality (see Lemma \ref{lem;GN} below). 

\begin{theorem}
\label{thm;sv}
Let $m \in \mathbb{Z}$. 
For $\rho \in (0,(4C_{\mathrm{GN}})^{-1} )$ there exists a function $(\varphi_{+,m},\varphi_{-,m+1})$ in $\mathcal{H}_{m}$ such that 
\begin{equation*}
	E_{m}(\varphi_{+,m},\varphi_{-,m+1}) =\mathcal{E}_{m}(\rho) 
	\text{ and } M(\varphi_{+,m},\varphi_{-,m+1}) = \rho.
\end{equation*}
\end{theorem}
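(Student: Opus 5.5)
We construct the minimizer by concentration compactness. Throughout we view $(v_+,v_-)\in\mathcal{H}_m$ as the pair $\Psi=(e^{im\theta}v_+,e^{i(m+1)\theta}v_-)$ on $\mathbb{R}^2$, so that $E_m(v_+,v_-)=E(\Psi)$ and $M(v_+,v_-)=M(\Psi)$, and we work with $E$.

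\textbf{Step 1: coercivity and a negative upper bound.} On the Fourier side the quadratic part of $E$ is
\begin{equation*}
\tfrac14\int |\xi|^2(|\hat\psi_+|^2+|\hat\psi_-|^2)\,d\xi+\nu\,\re\int(\text{symbol of order }|\xi|)\,\hat\psi_-\overline{\hat\psi_+}\,d\xi .
\end{equation*}
Its lowest eigenvalue is $\frac14|\xi|^2-\nu|\xi|$, with minimum $-\nu^2$ attained on the circle $|\xi|=2\nu$. Hence
\begin{equation*}
E(\Psi)\ge \tfrac14\|\nabla\Psi\|_{L^2}^2-\nu\|\nabla\Psi\|_{L^2}\|\Psi\|_{L^2}-\tfrac14 N(\Psi),
\end{equation*}
where $N$ is the quartic term. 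By Lemma~\ref{lem;GN} and $\lambda$-dependent elementary bounds, $N(\Psi)\le C\,C_{\mathrm{GN}}\,\rho\,\|\nabla\Psi\|_{L^2}^2$. The threshold $\rho<(4C_{\mathrm{GN}})^{-1}$ is presumably calibrated, after the paper's normalization of $C_{\mathrm{GN}}$ and the $\lambda$'s, so that $\frac14\|\nabla\Psi\|^2-\frac14N(\Psi)\ge c\|\nabla\Psi\|^2$ with $c>0$. This gives two consequences:
\begin{itemize}
\item[(i)] $\mathcal{E}_m(\rho)>-\infty$;
\item[(ii)] every minimizing sequence is bounded in $H^1$.
\end{itemize}

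For the upper bound, take $\Psi$ with Fourier support concentrated near $|\xi|=2\nu$ in the eigenvector direction, with mass $\rho$ and respecting the $m$-symmetry. Spreading it in frequency makes the quadratic part approach $-\nu^2\rho$. Since $N>0$ strictly, we get $\mathcal{E}_m(\rho)<-\nu^2\rho$. This is the crucial strict inequality: the resonance at the essential-spectrum bottom $-\nu^2$ is broken by the focusing nonlinearity. Concretely, choose $\Psi$ with $\hat\Psi$ supported exactly on the eigenvector branch, so the quadratic part is at most $-\nu^2\rho+\epsilon$, while $N(\Psi)$ stays bounded below after scaling in the normal frequency direction. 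Equivalently, fix one such profile and then compare.

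\textbf{Step 2: rule out vanishing.} Let $(\Psi_n)$ be a minimizing sequence. In the equivariant class, translations are unavailable, so concentration compactness reduces to radial-type arguments. If $\Psi_n\rightharpoonup0$, then vanishing of the $L^4$ norm would give $\liminf E(\Psi_n)\ge-\nu^2\rho$ by Step 1, contradicting the strict bound. So $N(\Psi_n)\not\to0$.

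Since $|v_\pm|$ are radial and bounded in $H^1$, Strauss-type decay gives $|v|\lesssim r^{-1/2}$ away from the origin. The $L^4$ mass therefore cannot escape to infinity, so it concentrates in a fixed bounded annulus-or-ball. The compact embedding $H^1_{\mathrm{rad}}\hookrightarrow L^4$ then applies directly. Radial functions in $H^1(\mathbb{R}^2)$ embed compactly into $L^p$ for $2<p<\infty$, and this transfers to $|v_\pm|$, hence $N(\Psi_n)\to N(\Psi)$ along a subsequence with weak limit $\Psi\ne0$.

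\textbf{Step 3: rule out dichotomy and conclude.} Put $\mu=M(\Psi)\in(0,\rho]$. By weak lower semicontinuity of the coercive quadratic part (which is bounded below by $-\nu^2M$), together with Brezis--Lieb splitting for the remainder $\Psi_n-\Psi$, which has $L^4$ norm tending to $0$,
\begin{equation*}
\mathcal{E}_m(\rho)\ge E(\Psi)+\liminf Q(\Psi_n-\Psi)\ge \mathcal{E}_m(\mu)-\nu^2(\rho-\mu).
\end{equation*}
Here $Q$ is the quadratic form, and $Q\ge-\nu^2M$. We then need the strict subadditivity-type inequality
\begin{equation*}
\mathcal{E}_m(\rho)<\mathcal{E}_m(\mu)-\nu^2(\rho-\mu)\qquad\text{for }\mu<\rho .
\end{equation*}
To prove it, use the scaling $\Psi\mapsto\theta^{1/2}\Psi$. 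Since $\mathcal{E}_m(\mu)+\nu^2\mu<0$, we have $E(\theta^{1/2}\Psi)+\nu^2\theta M=\theta(Q+\nu^2M)-\theta^2N/4<\theta\,(E+\nu^2M)$ for $\theta>1$. Hence $\mathcal{E}_m(\rho)+\nu^2\rho$ is strictly decreasing and strictly sublinear, which gives the inequality. This forces $\mu=\rho$, so $\Psi_n\to\Psi$ strongly in $L^2$ and $\Psi$ is a minimizer.

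The main obstacle is the strict bound $\mathcal{E}_m(\rho)<-\nu^2\rho$ for each $m$. It requires constructing $m$-equivariant test functions near the resonant circle whose quadratic energy approaches $-\nu^2\rho$, while their $L^4$ norm does not decay faster than that approximation error. I would try Bessel-type profiles $\hat v\sim J$-modulated bumps in $|\xi|$ of width $\delta$. For these, the quadratic excess is $O(\delta^2)$, while $N\sim\delta$ in two dimensions, so the quartic gain dominates.
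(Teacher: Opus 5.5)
Your architecture is sound. Step 2 (compactness through $|v_\pm|\in H^1_{\mathrm{rad}}$) is what the paper has in mind when it says the equivariant case can use the radial compact embedding.

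Step 3 is a legitimate, shorter alternative to the paper's route. The paper (via the proof of Theorem~\ref{thm;gs}) splits the full energy with Brezis--Lieb and bounds the remainder by $\mathcal{E}(\rho-\eta)$. It then uses Lemma~\ref{lem;Lions} to show the weak limit minimizes $\mathcal{E}(\eta)$, and only then obtains the strict inequality that excludes dichotomy. You instead use that $N(\Psi_n)\to N(\Psi)$ in the equivariant class, so the remainder carries only quadratic energy, which is bounded below by the linear bottom. Dichotomy is then excluded by strict monotonicity of $F(\mu)=\mathcal{E}_m(\mu)+\frac{\nu^2}{4}\mu$, which follows from $F(\theta\mu)\le\theta F(\mu)$ and $F<0$. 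This avoids the Lions lemma entirely. The price is that it depends on strong $L^4$ convergence, so it would not carry over to Theorem~\ref{thm;gs}.

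Fix the constant, though. By \eqref{eq;linE} the lower branch of the symbol is $\frac14|\xi|^2-\frac{\nu}{2}|\xi|$, so the linear bottom is $-\frac{\nu^2}{4}M$, attained on $|\xi|=\nu$. With your $-\nu^2\rho$ the key inequality is actually false for small $\rho$, since $\mathcal{E}_m(\rho)\ge-\frac{\nu^2}{4}\rho-O(\rho^2)$.

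The genuine gap is the step you call the main obstacle, $\mathcal{E}_m(\rho)<-\frac{\nu^2}{4}\rho$. This is the paper's main contribution, and none of the justifications you offer is correct.
\begin{itemize}
\item ``$N>0$ strictly'' is not enough, because the linear infimum is not attained.
\item $N$ does not stay bounded below. A mass-$\rho$ function with Fourier support in an annulus of width $\delta$ has $\|\Psi\|_{L^\infty}\lesssim(\delta\rho)^{1/2}$, hence $N\lesssim\delta\rho^2\to0$.
\item For the Bessel profiles you propose, $N$ is not of order $\delta$.
\end{itemize}

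To see the last point, take the paper's choice $R=1/\delta$, $v_-=a\chi(r/R)J_{m+1}(\nu r)$, $v_+=-\nu^{-1}(v_-'+\frac{m+1}{r}v_-)$. The asymptotics $|J_l(r)|\approx r^{-1/2}|\cos(\cdots)|$ give $a^2\approx\rho/R$ (Lemma~\ref{lemA.1}). The excess equals $\frac14\|r^m(r^{-m}v_+)'-\nu v_-\|_{L_r^2}^2\lesssim a^2/R\approx\rho R^{-2}$. This holds because the other square in \eqref{eq;linE} vanishes identically, and Bessel's equation leaves only terms carrying derivatives of the cutoff (Lemma~\ref{lemA.2}). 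Meanwhile $N\gtrsim a^4\int_{\sqrt R}^{R}r^{-1}\,dr\approx\rho^2R^{-2}\log R$ (Lemma~\ref{lemA.3}).

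So the quartic gain beats the excess only by a factor $\rho\log R$. That factor comes from the borderline $r^{-1/2}$ decay of $J_l$ in $L^4(r\,dr)$; it is not the factor of order $R$ your heuristic $N\sim\delta$ predicts. Because the comparison is this marginal, the cutoff must vary on scale $R$, and the excess must be shown to be $O(\rho R^{-2})$ with no logarithmic loss. That computation is what your proposal is missing.
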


As a conclusion of Theorem \ref{thm;sv}, there exists a small non-scattering global solution $(\psi_{+},\psi_{-})$ to \eqref{NLSSO} with the form 
\begin{equation*}
	(\psi_{+}(t,x,y),\psi_{-}(t,x,y)) = (e^{i\omega t}e^{im\theta}\varphi_{+,m}(r),e^{i\omega t}e^{i(m+1)\theta}\varphi_{-,m+1}(r))
\end{equation*}
for some $\omega$ satisfying $\varphi_{+,m}\neq 0$ and $\varphi_{-,m+1}\neq 0$. It is worth emphasizing that there are infinitely many such solutions to \eqref{NLSSO}. Here, we note that the $\dot{H}^1$-norm of semi-vortices is small when $\rho$ is small (see Remark \ref{rmk2.2}). 

%
%

\begin{remark}
\begin{itemize}
\item The uniqueness of the minimizer $(\varphi_{+,m},\varphi_{-,m+1})$ is unclear. 
\item Stability of the semi-vortex in $H^1(\mathbb{R}^2) \times H^1(\mathbb{R}^2)$ is also unclear. 
\end{itemize}
\end{remark}

Next, we look for the (energy) ground state of \eqref{NLSSO}, i.e. a minimizer of the following infimum under the $L^2$-constraint condition; 
\begin{equation*}
	\mathcal{E}(\rho) \coloneqq  \inf \{E(u_{+},u_{-}) \semicolon (u_{+},u_{-})  \in H^1(\mathbb{R}^2) \times H^1(\mathbb{R}^2), M(u_{+},u_{-}) =\rho\}
\end{equation*}
for sufficiently small $\rho$. Once we obtain the minimizer, we obtain the solution of the equation
\begin{equation}
\tag{SE}
\label{ellip}
	\begin{cases}
	 \frac{1}{2} \Delta Q_+ -\omega Q_+ = \nu D^{-} Q_{-} - \lambda_+ |Q_+|^2Q_+ - \lambda_0|Q_{-}|^2 Q_+ , \\
	\frac{1}{2} \Delta Q_{-}-\omega Q_- = - \nu  D^{+} Q_{+} - \lambda_{-} |Q_{-}|^2Q_{-} - \lambda_0|Q_{+}|^2Q_{-},
	\end{cases}
\end{equation}
for some $\omega$ by the Lagrange multiplier.

\begin{theorem}
\label{thm;gs}
For $\rho \in (0,(4C_{\mathrm{GN}})^{-1} )$, there exists a function $(Q_{+},Q_{-})$ such that 
\begin{equation*}
	E(Q_{+},Q_{-}) = \mathcal{E}(\rho) \text{ and } M(Q_{+},Q_{-}) = \rho.
\end{equation*}
\end{theorem}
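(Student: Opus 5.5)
Here is the plan: a concentration compactness argument for $\mathcal{E}(\rho)$ in the full space $H^1\times H^1$, modelled on the proof of Theorem \ref{thm;sv}.

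\textbf{Step 1: the infimum is finite and minimizing sequences are bounded.} The linear part can be written as
\[
\frac14\int |D^+u_+ + 2\nu u_-|^2 + |D^-u_- - 2\nu u_+|^2\,dz \;-\; \nu^2 M(u_+,u_-),
\]
after integrating by parts with $(D^\pm)^*=-D^\mp$ and checking the real parts. Since $\|D^\pm f\|_{L^2}=\|\nabla f\|_{L^2}$, the cross term is bounded by $\nu\rho^{1/2}\|\nabla u\|_{L^2}$. The Gagliardo--Nirenberg inequality (Lemma \ref{lem;GN}) bounds the quartic term by $C_{\mathrm{GN}}\max(\lambda)\,\rho\,\|\nabla u\|_{L^2}^2$, possibly after normalizing the constant as in the paper. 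The smallness $\rho<(4C_{\mathrm{GN}})^{-1}$ is exactly what makes the quadratic gradient term dominate. Hence $E\ge c\|\nabla u\|^2 - C\|\nabla u\|$ on the constraint. So $\mathcal{E}(\rho)>-\infty$ and minimizing sequences are bounded in $H^1$.

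\textbf{Step 2: strict negativity and strict subadditivity.} First I would show $\mathcal{E}(\rho)<-\nu^2\rho$, the bottom of the essential spectrum of the linear part, via the resonance. Take a test function concentrated in Fourier space near the circle $|\xi|=2\nu$, where the linear symbol attains $-\nu^2$, with spinor structure matching the minimizing eigenvector. Then spread it out, $u_R(z)=R^{-1}g(z/R)e^{i\cdot}$-type or radially via Bessel profiles. The linear energy is $-\nu^2\rho+O(R^{-2})$, while the quartic gain is of order $\rho^2R^{-1}$ because the mass concentrates on a ring in frequency, behaving like a one-dimensional problem. So the nonlinearity wins. This is the key point and the \emph{main obstacle}: the naive $L^2$ scaling loses, and the ring geometry is what gives the $R^{-1}$ versus $R^{-2}$ advantage. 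I would reuse the test functions already built for $\mathcal{E}_m(\rho)$ in the proof of Theorem \ref{thm;sv}, lifted to $H^1\times H^1$ by $(e^{im\theta}v_+,e^{i(m+1)\theta}v_-)$ with $E=2\pi E_m$. Since $\mathcal{E}(\rho)\le 2\pi\mathcal{E}_m(\rho)$, this gives the bound directly if Theorem \ref{thm;sv}'s proof established it.

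Next, set $e(\rho)=\mathcal{E}(\rho)+\nu^2\rho<0$. Scaling by amplitude, $u\mapsto\theta^{1/2}u$ with $\theta>1$, multiplies the nonnegative quadratic part by $\theta$ and the quartic part by $\theta^2$. This gives $e(\theta\rho)<\theta e(\rho)$ whenever $e(\rho)<0$. The standard argument then yields strict subadditivity: $e(\rho)<e(\alpha)+e(\rho-\alpha)$ for $0<\alpha<\rho$.

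\textbf{Step 3: concentration compactness.} Take a minimizing sequence and write $e(\rho)$ in terms of the nonnegative quadratic form.
\begin{itemize}
\item \emph{Vanishing} is excluded: by Lions' lemma it would give $\|u_n\|_{L^4}\to0$, hence $\liminf(E+\nu^2M)\ge0$, contradicting $e(\rho)<0$.
\item So after translation, which is allowed since $E$ is translation invariant, $u_n\rightharpoonup u\neq0$ in $H^1$.
\end{itemize}
Apply Brezis--Lieb to the quartic terms and Hilbert orthogonality to the quadratic form and the mass. Then
\[
e(\rho)\ge e(\alpha)+e(\rho-\alpha), \qquad \alpha=M(u),
\]
using that $e$ is continuous and nonincreasing (it is $\le0$). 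Strict subadditivity forces $\alpha=\rho$. Hence $u_n\to u$ in $L^2$, and by Gagliardo--Nirenberg in $L^4$. Weak lower semicontinuity of the quadratic form gives $E(u)\le\mathcal{E}(\rho)$, so $(Q_+,Q_-)=u$ is a minimizer.
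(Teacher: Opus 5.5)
The gap is in Step 2, which you rightly identify as the crux.

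\textbf{The completed square is wrong.} Expanding your two squares gives the spin--orbit term $-\nu\re\int(\overline{u_+}D^-u_--\overline{u_-}D^+u_+)\,dz$ rather than $+\frac{\nu}{2}\re\int(\cdots)\,dz$. The correct identity is
\[
E^{\mathrm{lin}}(u_+,u_-)=\tfrac14\|D^-u_-+\nu u_+\|_{L^2}^2+\tfrac14\|D^+u_+-\nu u_-\|_{L^2}^2-\tfrac{\nu^2}{4}M(u_+,u_-).
\]
So the threshold is $-\frac{\nu^2}{4}\rho$ and the resonant circle is $|\xi|=\nu$. Your target $\mathcal{E}(\rho)<-\nu^2\rho$ is actually false for small $\rho$: this identity together with Lemma \ref{lem;GN} gives $\mathcal{E}(\rho)\ge-\frac{\nu^2}{4}\rho-O(\rho^2)$. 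This part is easily repaired by working with $e(\rho)=\mathcal{E}(\rho)+\frac{\nu^2}{4}\rho$.

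\textbf{The mechanism you give for $e(\rho)<0$ is incorrect.} Take the ring (Bessel) profile $v_-=a\chi(r/R)J_{m+1}(\nu r)$. Then $|a|\sim\sqrt{\rho/R}$ and $|v_-|^4r\sim a^4r^{-1}$. The quartic gain is therefore $N\sim a^4\log R\sim\rho^2R^{-2}\log R$, not $\rho^2R^{-1}$. Your one-dimensional heuristic treats $|v|^2r$ as a 1D density and loses the extra factor $r^{-1}$ in $|v|^4r$.

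Now choose $v_+=-\nu^{-1}r^{-(m+1)}(r^{m+1}v_-)'$. Then one square vanishes identically and the other equals $\frac{1}{4\nu^2}\|(\Delta+\nu^2)u_-\|_{L^2}^2$, so the linear excess is $O(\rho R^{-2})$. The nonlinearity therefore wins only by a logarithm, and establishing that logarithmic margin is exactly the content of Lemmas \ref{lemA.1}--\ref{lemA.3}.

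The curvature of the circle rules out genuinely one-dimensional behaviour. Even Knapp-type $R\times R^{1/2}$ wave packets, at the same $O(\rho R^{-2})$ linear cost, give only $\rho^2R^{-3/2}$. That would suffice for Theorem \ref{thm;gs}, but such packets are not semi-vortices, so they are unavailable for Theorem \ref{thm;sv}.

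\textbf{Deferring to Theorem \ref{thm;sv} does not fill the gap.} In the paper, the Bessel computation behind Lemma \ref{lem;lin} is the only proof of this inequality, and it serves both theorems. As written, $e(\rho)<0$ is asserted rather than proved, via a rate that would not survive checking.

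Once $e(\rho)<0$ is actually established, the rest of your plan is sound and follows the paper's architecture: Gagliardo--Nirenberg bounds, non-vanishing, a translated weak limit, Brezis--Lieb splitting, and strict subadditivity. One step of yours is genuinely better.

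The paper only obtains $\mathcal{E}(\theta\eta)\le\theta\mathcal{E}(\eta)$ in general, with strictness when $\mathcal{E}(\eta)$ is attained. It must therefore first prove that the weak limit minimizes at its own mass before excluding $\eta<\rho$.

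Your observation is that, because the remaining quadratic form is nonnegative, $\liminf_n N(u_n)\ge-e(\rho)>0$ along a minimizing sequence. This yields $e(\theta\rho)\le\theta^2e(\rho)<\theta e(\rho)$ directly. You get strict subadditivity for every splitting, and monotonicity of $e$, a priori.

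You do still need continuity of $e$ to handle $e(M(w_n))$, since $M(w_n)\to\rho-\alpha$ possibly from above. The paper sidesteps this by rescaling $w_n$ explicitly, and the same rescaling works for you.
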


Theorem \ref{thm;gs} gives the small ground state of \eqref{ellip}. We note that the $\dot{H}^1$-norm of the ground state is small as well as the semi-vortices when $\rho$ is small. 

\begin{remark}
\begin{itemize}
\item It is not clear that the ground state is a semi-vortex solution. According to \cite{SLM14}, numerical calculation suggests that a mixed mode is the ground state when $\lambda_{\pm}=1$ and $\lambda_{0}>1$.
\item There is no radial solution to \eqref{NLSSO} because of SOC and thus rearrangement argument does not work for Theorem \ref{thm;gs}. 
\item The (action) ground state can be also characterized by the Nehari functional, as well as the traveling ground states. Note that the equation is not Galilean invariant and thus the existence of the traveling waves is not trivial. See \cite{FHI24} for traveling waves of a non-Galilean invariant equation. 
\end{itemize}
\end{remark}

Next, we show stability of the ground states. We have the following small data global existence result for \eqref{NLSSO}. 

\begin{theorem}
\label{thm;ge}
If the initial data $(\psi_{+,0}, \psi_{-,0}) \in H^1(\mathbb{R}^2) \times H^1(\mathbb{R}^2)$ satisfies
\begin{equation*}
	M(\psi_{+,0}, \psi_{-,0}) < \frac{1}{4C_{\mathrm{GN}}},
\end{equation*}
then the solution $(\psi_{+},\psi_{-})$ to \eqref{NLSSO} is global. 
\end{theorem}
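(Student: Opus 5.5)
Since \eqref{NLSSO} is locally well-posed in $H^1(\mathbb{R}^2)\times H^1(\mathbb{R}^2)$, a standard blow-up alternative holds: if $T_{\max}<\infty$, then $\|\nabla\psi_+(t)\|_{L^2}+\|\nabla\psi_-(t)\|_{L^2}\to\infty$ as $t\to T_{\max}$ (the mass is conserved, so only the gradient can blow up). The plan is therefore to derive an a priori bound on $\|\nabla\psi_\pm(t)\|_{L^2}$ in terms of the conserved quantities $E$ and $M$ alone. This bound will come from conservation of energy and mass, combined with the Gagliardo--Nirenberg inequality (Lemma \ref{lem;GN}) for the quartic terms and Cauchy--Schwarz for the SOC terms.

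The first step is to rewrite the kinetic part. Since $(D^{\pm})^*=-D^{\mp}$ and $D^{+}D^{-}=\Delta$, integration by parts gives $\|D^{\pm}u\|_{L^2}=\|\nabla u\|_{L^2}$ for $u\in H^1$. Hence the first line of $E$ equals $\frac14(\|\nabla\psi_+\|_{L^2}^2+\|\nabla\psi_-\|_{L^2}^2)$.

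For the nonlinear part, use $2\lambda_0|\psi_+|^2|\psi_-|^2\le \lambda_0(|\psi_+|^4+|\psi_-|^4)$, together with the Gagliardo--Nirenberg inequality in the form $\|u\|_{L^4}^4\le C_{\mathrm{GN}}\|u\|_{L^2}^2\|\nabla u\|_{L^2}^2$. I expect the constants to be normalized in Lemma \ref{lem;GN} so that, for the vector function, the potential term satisfies
\begin{equation*}
\frac14\int_{\mathbb{R}^2}\bigl(\lambda_+|\psi_+|^4+\lambda_-|\psi_-|^4+2\lambda_0|\psi_+|^2|\psi_-|^2\bigr)\,dz\le C_{\mathrm{GN}}\,M(\psi_+,\psi_-)\,\bigl(\|\nabla\psi_+\|_{L^2}^2+\|\nabla\psi_-\|_{L^2}^2\bigr).
\end{equation*}
The exact bookkeeping, including the dependence on the $\lambda$'s, is absorbed in how $C_{\mathrm{GN}}$ is defined; this is the point where the threshold $(4C_{\mathrm{GN}})^{-1}$ comes from, and it is the step I would check most carefully against the statement of Lemma \ref{lem;GN}. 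The SOC term is bounded by Cauchy--Schwarz and Young:
\begin{equation*}
\Bigl|\frac{\nu}{2}\re\int_{\mathbb{R}^2}\overline{\psi_+}D^-\psi_--\overline{\psi_-}D^+\psi_+\,dz\Bigr|\le \frac{\nu}{2}M^{1/2}\|\nabla\Psi\|_{L^2}\le \varepsilon\|\nabla\Psi\|_{L^2}^2+C_\varepsilon\nu^2M,
\end{equation*}
where $\|\nabla\Psi\|_{L^2}^2:=\|\nabla\psi_+\|_{L^2}^2+\|\nabla\psi_-\|_{L^2}^2$.

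Combining these estimates, with $M=M(\psi_{+,0},\psi_{-,0})$ we get
\begin{equation*}
E(\psi_{+,0},\psi_{-,0})=E(\psi_+(t),\psi_-(t))\ge\Bigl(\frac14-C_{\mathrm{GN}}M-\varepsilon\Bigr)\|\nabla\Psi(t)\|_{L^2}^2-C_\varepsilon\nu^2M.
\end{equation*}
Since $M<(4C_{\mathrm{GN}})^{-1}$, we can choose $\varepsilon>0$ so that $\frac14-C_{\mathrm{GN}}M-\varepsilon>0$. This gives a uniform bound on $\|\nabla\Psi(t)\|_{L^2}$ over the whole lifespan, so the blow-up alternative forces $T_{\max}=\infty$, and likewise backward in time.

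The main obstacle is matching the normalization of $C_{\mathrm{GN}}$ so that the coefficient in front of $\|\nabla\Psi\|_{L^2}^2$ is exactly $\frac14-C_{\mathrm{GN}}M$; the rest of the argument is routine. The SOC term causes no difficulty, because it is only linear in the gradient and is absorbed by Young's inequality.
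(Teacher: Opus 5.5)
Your proposal is correct and follows the paper's route. The paper proves Lemma~\ref{lem;bdd}, i.e.\ $E(u_{+},u_{-})\geq (\frac14-\varepsilon-C_{\mathrm{GN}}\rho)\|(u_{+},u_{-})\|_{\dot{H}^1\times\dot{H}^1}^2-C_{\varepsilon}\rho$, by exactly your Gagliardo--Nirenberg plus Cauchy--Schwarz/Young argument, and then deduces Theorem~\ref{thm;ge} from it via mass/energy conservation and the blow-up alternative.

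Your normalization worry is settled by how Lemma~\ref{lem;GN} is stated: it is already in the vector form $N(u_{+},u_{-})\leq C_{\mathrm{GN}}M(u_{+},u_{-})\|(u_{+},u_{-})\|_{\dot{H}^1\times\dot{H}^1}^2$, with $N$ containing the factor $\frac14$ and the $\lambda$'s. So you should apply it directly rather than pass through $2\lambda_0|\psi_+|^2|\psi_-|^2\le\lambda_0(|\psi_+|^4+|\psi_-|^4)$ and the scalar inequality, which would generally give a different constant.
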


Let 
\begin{equation*}
	\mathscr{G}_{\rho} \coloneqq  \{ (Q_{+},Q_{-}) \in H^1(\mathbb{R}^2) \times H^1(\mathbb{R}^2) \semicolon E(Q_{+},Q_{-})=\mathcal{E}(\rho), M(Q_{+},Q_{-})=\rho\}. 
\end{equation*}
We remark that $\mathscr{G}_{\rho} \neq \emptyset$ when $0<\rho <1/(4C_{\mathrm{GN}})$ due to Theorem \ref{thm;gs}. 
We say that $\mathscr{G}_{\rho}$ is stable if for any $\varepsilon>0$ there exists $\delta>0$ such that  for any initial data $(\psi_{+,0},\psi_{-,0})$ with $\inf_{(Q_{+},Q_{-}) \in \mathscr{G}_{\rho}}\|(\psi_{+,0},\psi_{-,0}) -(Q_{+},Q_{-})\|_{H^1 \times H^1}<\delta$ the corresponding solution $(\psi_{+}(t),\psi_{-}(t))$ to \eqref{NLSSO} satisfies 
\begin{equation*}
	\inf_{(Q_{+},Q_{-}) \in \mathscr{G}_{\rho}}\|(\psi_{+}(t),\psi_{-}(t)) -(Q_{+},Q_{-})\|_{H^1 \times H^1} < \varepsilon
\end{equation*}
for all $t \geq 0$. 

\begin{theorem}
\label{thm;stab}
Let $0<\rho <1/(4C_{\mathrm{GN}})$. Then $\mathscr{G}_{\rho}$ is stable in the above sense.  
\end{theorem}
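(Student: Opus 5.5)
We argue by contradiction, following the Cazenave--Lions scheme, with the compactness of minimizing sequences (established in the proof of Theorem \ref{thm;gs}) as the key input. Suppose $\mathscr{G}_{\rho}$ is not stable. Then there exist $\varepsilon_0>0$, initial data $(\psi_{+,0}^n,\psi_{-,0}^n)$ with $\inf_{Q\in\mathscr{G}_\rho}\|(\psi_{+,0}^n,\psi_{-,0}^n)-Q\|_{H^1\times H^1}\to 0$, and times $t_n\ge 0$ such that
\begin{equation*}
\inf_{Q\in\mathscr{G}_\rho}\|(\psi_{+}^n(t_n),\psi_{-}^n(t_n))-Q\|_{H^1\times H^1}\ge \varepsilon_0 .
\end{equation*}
Since $\rho<1/(4C_{\mathrm{GN}})$ and the mass is continuous in $L^2$, we have $M(\psi_{+,0}^n,\psi_{-,0}^n)<1/(4C_{\mathrm{GN}})$ for large $n$. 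Theorem \ref{thm;ge} then makes the solutions global, so $t_n$ is well defined.

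By continuity of $E$ and $M$ on $H^1\times H^1$ (the quartic term is controlled by Gagliardo--Nirenberg), we get $M(\psi^n_0)\to\rho$ and $E(\psi^n_0)\to\mathcal{E}(\rho)$. Set $u^n:=(\psi_+^n(t_n),\psi_-^n(t_n))$. Conservation of energy and mass gives $M(u^n)\to\rho$ and $E(u^n)\to\mathcal{E}(\rho)$. Rescale to $\tilde u^n=\sqrt{\rho/M(u^n)}\,u^n$. Then $M(\tilde u^n)=\rho$, $\|\tilde u^n-u^n\|_{H^1}\to0$, and $E(\tilde u^n)\to\mathcal{E}(\rho)$. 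For the last two claims we need $\sup_n\|u^n\|_{H^1}<\infty$. This bound comes from the coercivity estimate used in the existence proof: for $M<1/(4C_{\mathrm{GN}})$, the kinetic part $\frac14\int|D^+u_+|^2+|D^-u_-|^2$ dominates the quartic term with a constant strictly less than one, and the SOC term is bounded by $\nu\sqrt{M}\,\|\nabla u\|_{L^2}$. Hence $(\tilde u^n)$ is a minimizing sequence for $\mathcal{E}(\rho)$.

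We now invoke the concentration compactness result from the proof of Theorem \ref{thm;gs}: every minimizing sequence is, up to a subsequence and spatial translations $z_n$, relatively compact in $H^1\times H^1$. So $\tilde u^n(\cdot+z_n)\to Q$ in $H^1\times H^1$, and $Q$ has mass $\rho$ and energy $\mathcal{E}(\rho)$, i.e. $Q\in\mathscr{G}_\rho$. The set $\mathscr{G}_\rho$ is translation invariant because $E$ and $M$ are, so $Q(\cdot-z_n)\in\mathscr{G}_\rho$. It follows that $\inf_{\mathscr{G}_\rho}\|u^n-Q'\|_{H^1}\to0$, which contradicts the choice of $t_n$ and $\varepsilon_0$.

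The main obstacle is the compactness of arbitrary minimizing sequences, rather than just the existence of one minimizer. If the proof of Theorem \ref{thm;gs} only produced a minimizer, we would first have to rerun the argument for a general minimizing sequence. Vanishing must be excluded by showing $\mathcal{E}(\rho)<-\frac{\nu^2}{2}\rho$, the bottom of the essential spectrum of the linear part, which is attained in the limit by a profile concentrating at the resonant frequency $|\xi|=\nu$. Dichotomy must be excluded by strict subadditivity $\mathcal{E}(\rho)<\mathcal{E}(\alpha)+\mathcal{E}(\rho-\alpha)$. Once these two points hold, the stability argument above is routine.
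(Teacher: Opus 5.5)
Your argument is correct and is essentially the paper's proof: contradiction, conservation of mass and energy, rescaling to exact mass $\rho$, compactness up to translations of minimizing sequences from the proof of Theorem~\ref{thm;gs}, and translation invariance of $\mathscr{G}_{\rho}$. The only deviation is harmless: you get $E(\psi_{+,0}^n,\psi_{-,0}^n)\to\mathcal{E}(\rho)$ from local Lipschitz continuity of $E$ plus the $H^1$-boundedness of $\mathscr{G}_{\rho}$ (Lemma~\ref{lem;bdd}), whereas the paper first extracts a strongly convergent translated subsequence of nearby ground states. One slip in your closing remark, which your argument does not use: since $E^{\mathrm{lin}}(u)=\frac{1}{2}\langle\mathcal{L}u,u\rangle$, the vanishing threshold is $\mathcal{E}^{\mathrm{lin}}(\rho)=-\frac{\nu^2}{4}\rho$ (Lemma~\ref{lem;lin}), not $-\frac{\nu^2}{2}\rho$; indeed Lemma~\ref{lem;bdd} gives $\mathcal{E}(\rho)\ge-\frac{\nu^2}{4}\rho-C\rho^2$, so $\mathcal{E}(\rho)<-\frac{\nu^2}{2}\rho$ actually fails for small $\rho$.
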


At last, we also discuss the mixed mode solution in the very special case, i.e. $\lambda_{+}=\lambda_{-}=\lambda_{0}$. 

\begin{theorem}
\label{thm;mm}
Assume $\lambda_{+}=\lambda_{-}=\lambda_{0}$. Let $(\varphi_{+,m}(r),\varphi_{-,m+1}(r))$ be the solution of the elliptic equation \eqref{ellip_m} for $m \in \mathbb{Z}$. That is, $(e^{i\omega t}e^{im\theta}\varphi_{+,m}(r),e^{i\omega t}e^{i(m+1)\theta}\varphi_{-,m+1}(r))$ is a solution to \eqref{NLSSO} for some $\omega$. 
Then, for any $\eta \in [0,2\pi)$, 
\begin{align*}
	\begin{pmatrix} \psi_{+}\\ \psi_{-} \end{pmatrix} 
	&=
	\begin{pmatrix} e^{i\omega t} f_{+,m,\eta}\\ e^{i\omega t} f_{-,m+1,\eta} \end{pmatrix} \\
	&
	\coloneqq  
	e^{i\omega t} 
	\left( \cos \eta 
	\begin{pmatrix} 
	e^{im\theta}\varphi_{+,m}(r)
	\\ e^{i(m+1)\theta}\varphi_{-,m+1}(r) 
	\end{pmatrix}
	+\sin \eta
	\begin{pmatrix} 
	 - e^{-i(m+1)\theta}\overline{\varphi_{-,m+1}(r)}
	\\ e^{-im\theta}\overline{\varphi_{+,m}(r)}
	\end{pmatrix}
	\right)
\end{align*}
is the solution of \eqref{NLSSO}. Moreover, we have
\begin{equation*}
	E(f_{+,m,\eta}, f_{-,m+1,\eta}) =E (\varphi_{+,m},\varphi_{-,m+1})
\end{equation*}
and 
\begin{equation*}
	M(f_{+,m,\eta}, f_{-,m+1,\eta}) =M (\varphi_{+,m},\varphi_{-,m+1}).
\end{equation*}
\end{theorem}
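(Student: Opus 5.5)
The proof will be a direct verification in three steps: the second component is again a solution, the nonlinearity is invariant under rotations of the two solutions into each other, and the energy and mass are preserved.

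\emph{Step 1: the second component is a solution.} Write $\Psi_1=(\psi_+,\psi_-)$ for the given semi-vortex solution of \eqref{NLSSO}. Set $\Psi_2=(-\overline{\psi_-(-t,-z)},\overline{\psi_+(-t,-z)})$, the mirror image of $\Psi_1$ composed with $z\mapsto -z$. By the mirror-image symmetry in the introduction (valid since $\lambda_+=\lambda_-$), $\Psi_2$ solves \eqref{NLSSO}. Under $z\mapsto -z$ we have $\theta\mapsto\theta+\pi$, so $e^{ik\theta}\mapsto(-1)^k e^{ik\theta}$. For the stationary semi-vortex $e^{i\omega t}(e^{im\theta}\varphi_+,e^{i(m+1)\theta}\varphi_-)$ we get
\begin{equation*}
\Psi_2=e^{i\omega t}(-1)^{m}\bigl(e^{-i(m+1)\theta}\overline{\varphi_-},\,e^{-im\theta}\overline{\varphi_+}\bigr)
\end{equation*}
up to an overall sign. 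The time reversal turns $e^{-i\omega t}$ into $e^{i\omega t}$ after conjugation. The overall constant sign is harmless, since the equation is invariant under multiplication by $-1$ (gauge). Hence $e^{i\omega t}(-e^{-i(m+1)\theta}\overline{\varphi_-},e^{-im\theta}\overline{\varphi_+})$ is a solution. Equivalently, one can check directly that this pair satisfies the linear part of \eqref{ellip}, using $\overline{D^\pm f}=D^\mp\overline f$.

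\emph{Step 2: the combination is a solution.} The linear part of \eqref{NLSSO} is satisfied by $\cos\eta\,\Psi_1+\sin\eta\,\Psi_2$ by linearity. The key point is the nonlinearity. When $\lambda_\pm=\lambda_0=\lambda$, the nonlinear term is $-\lambda(|\psi_+|^2+|\psi_-|^2)\Psi$. So it suffices to show two things:
\begin{itemize}
\item the density is preserved, $|f_+|^2+|f_-|^2=|\varphi_+|^2+|\varphi_-|^2$, which is a function of $r$ only;
\item each $\Psi_j$ satisfies its equation with the same potential $-\lambda\rho(r)\Psi_j$.
\end{itemize}
The second point holds because $|\Psi_2|^2=|\Psi_1|^2=\rho(r)$.

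For the first point, expand
\begin{equation*}
|f_+|^2+|f_-|^2=\cos^2\eta\,\rho+\sin^2\eta\,\rho+2\sin\eta\cos\eta\,\re\bigl(\Psi_1\cdot\overline{\Psi_2}\bigr).
\end{equation*}
The cross term is
\begin{equation*}
\re\bigl[e^{im\theta}\varphi_+\,(-e^{i(m+1)\theta}\varphi_-)+e^{i(m+1)\theta}\varphi_-\,e^{im\theta}\varphi_+\bigr]=0 .
\end{equation*}
This orthogonality $\Psi_1\cdot\overline{\Psi_2}=0$ pointwise is the crux. Given it, $f$ solves $i\partial_t f+\frac12\Delta f-(\text{SOC})f=-\lambda\rho f$, since each $\Psi_j$ does with the same $\rho$.

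\emph{Step 3: conservation of energy and mass.} The mass identity follows by integrating the pointwise density identity. For the energy, $\Psi_2$ has the same quadratic part as $\Psi_1$: $z\mapsto-z$ and conjugation preserve the kinetic term. For the SOC term, mirror symmetry maps \eqref{NLSSO}$_\nu$ to \eqref{NLSSO}$_{-\nu}$, and $z\mapsto -z$ maps it back, so the SOC integral is also preserved. I would check this through the explicit formula, using $\overline{D^{\pm}f}=D^{\mp}\overline f$ and integration by parts.

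The quadratic cross terms between $\Psi_1$ and $\Psi_2$ vanish. Integrating in $\theta$, $\Psi_1$ has angular modes $(m,m+1)$ while $\Psi_2$ has $(-(m+1),-m)$. These are mutually orthogonal in $L^2$ unless $m=-(m+1)$, which is impossible for an integer $m$. Each term of the quadratic form either preserves or shifts angular modes uniformly: $D^\pm$ shifts the mode by $\pm1$, and the pairing $\overline{\psi_+}D^-\psi_-$ compares the $+$ mode of one function with the $-$ mode minus one of the other. I expect verifying that these mode mismatches persist in every cross term to be the main, if routine, obstacle. The quartic part is $-\frac{\lambda}{4}\int\rho^2$ and is unchanged by the pointwise identity. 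Hence $E(f_{\pm})=\cos^2\eta E(\Psi_1)+\sin^2\eta E(\Psi_2)=E(\varphi_+,\varphi_-)$.
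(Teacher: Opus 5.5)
Your proof is correct in substance. For the equation and the mass it is the paper's argument: the pointwise identity $|f_+|^2+|f_-|^2=|U_+|^2+|U_-|^2$ with $U_+=e^{im\theta}\varphi_{+,m}$ and $U_-=e^{i(m+1)\theta}\varphi_{-,m+1}$ (the cross terms $\mp2\sin\eta\cos\eta\,\re(U_+U_-)$ cancel), followed by linearity. The paper does your ``direct check'': it writes the $f_+$-equation as $\cos\eta$ times the $U_+$-equation minus $\sin\eta$ times the conjugate of the $U_-$-equation, using $\overline{D^\pm g}=D^\mp\overline{g}$ and that $\omega$ is real.

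\textbf{Sign error in Step 1.} The pair $(-\overline{\psi_-(-t,-z)},\overline{\psi_+(-t,-z)})$ solves \eqref{NLSSO}$_{-\nu}$, not \eqref{NLSSO}$_{\nu}$. The introduction's solutions of \eqref{NLSSO}$_\nu$ are $(-\overline{\psi_-(-t,z)},\overline{\psi_+(-t,z)})$ and $(\overline{\psi_-(-t,-z)},\overline{\psi_+(-t,-z)})$, and changing the sign of a single component interchanges \eqref{NLSSO}$_\nu$ and \eqref{NLSSO}$_{-\nu}$. Your display, correctly, has no relative sign between the components. No overall sign can then produce the relative minus in $(-e^{-i(m+1)\theta}\overline{\varphi_-},e^{-im\theta}\overline{\varphi_+})$, so two slips cancel. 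The fix is to drop the reflection: $(-\overline{\psi_-(-t,z)},\overline{\psi_+(-t,z)})$ is literally $e^{i\omega t}(-e^{-i(m+1)\theta}\overline{\varphi_-},e^{-im\theta}\overline{\varphi_+})$. The same slip affects your symmetry heuristic for the SOC term in Step 3, since flipping one component flips the sign of $V_{\mathrm{SO}}$. With the corrected $\Psi_2$, the explicit check gives equality of the SOC densities pointwise.

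\textbf{Different route for the energy.} You split the quadratic part into diagonal and cross terms and kill each cross term by angular orthogonality: each carries a factor $e^{\pm i(2m+1)\theta}$ and $2m+1\neq0$. This checks out. The paper instead shows the energy densities are preserved pointwise, by the same algebra as for the mass density. With $A=D^+U_+$ and $B=D^-U_-$,
\[
D^+f_+=\cos\eta\,A-\sin\eta\,\overline{B}, \qquad D^-f_-=\cos\eta\,B+\sin\eta\,\overline{A},
\]
so $|D^+f_+|^2+|D^-f_-|^2=|A|^2+|B|^2$. In $\re(\overline{f_+}D^-f_--\overline{f_-}D^+f_+)$ the cross terms are
\[
\sin\eta\cos\eta\,\re\bigl(\overline{U_+A}-U_+A+\overline{U_-B}-U_-B\bigr)=0 .
\]
The pointwise route is shorter and never uses the semi-vortex structure. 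It shows that this rotation preserves the equation, $M$ and $E$ for any standing wave $e^{i\omega t}(U_+,U_-)$ when $\lambda_+=\lambda_-=\lambda_0$. Your route genuinely uses the angular structure through $2m+1\neq0$, which is harmless for the theorem as stated.
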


The solution with the form $(\psi_{+}, \psi_{-}) =(e^{i\omega t} f_{+,m}, e^{i\omega t} f_{-,m+1})$ is called mixed-mode. The form is a combination of two semi-vortices. Let us note that the counterpart $(-e^{i\omega t}e^{-i(m+1)\theta}\overline{\varphi_{-,m+1}(r)},e^{i\omega t}e^{-im\theta}\overline{\varphi_{+,m}(r)})$ is a solution to \eqref{NLSSO} by the mirror-image symmetry since $\lambda_{+}=\lambda_{-}$. Theorem \ref{thm;mm} might imply the non-uniqueness of the ground state in the nonlinearity case. Numerically, the mixed mode with $m=0$ seems to appear as the ground state when $\lambda_{\pm}=1$ and $\lambda_{0}>1$ and the semi-vortex with $m=0$ seems to appear as it when $\lambda_{\pm}=1$ and $\lambda_{0}<1$. 
See \cite{SLM14} for more detail. However, mathematically, the form of the mixed mode is not known when $\lambda_{+}=\lambda_{-}=\lambda_{0}$ does not hold.

\subsection{Idea of proofs}

Theorem \ref{thm;sv} can be shown in the similar way to Theorem \ref{thm;gs} and thus we focus on the proof of Theorem \ref{thm;gs}. 
We use the standard compactness argument. 
Let $(u_{+,n},u_{-,n})$ be a minimizing sequence of the minimizing problem $\mathcal{E}(\rho)$. Then we have the three cases; so-called vanishing, dichotomy, and tightness (compactness). We will show that the vanishing and dichotomy cases do not occur and tightness gives a minimizer. To exclude the vanishing case, we compare the nonlinear energy and the linear energy. That is, we will show
\begin{equation*}
	\mathcal{E}(\rho) <  \mathcal{E}^{\mathrm{lin}}(\rho),
\end{equation*}
where $\mathcal{E}^{\mathrm{lin}}(\rho)$ is the minimizing problem excluding the nonlinear term of the energy. To show this inequality is the main contribution of the present paper. To show this inequality, we use the Bessel function of the first kind $J_l$. Here, more concretely, $J_l$ ($l\in \mathbb{Z}$) is given by
\begin{equation*}
	J_l(r) = \sum_{n=0}^{\infty} \frac{(-1)^n}{n!\Gamma(n+l+1)}\left(\frac{r}{2}\right)^{2n+l}.
\end{equation*}
Formally, substituting $(\phi_{+},\phi_{-})\coloneqq  (e^{im\theta}J_{m}(\nu r), -e^{i(m+1)\theta}J_{m+1}(\nu r))$ into  the energy $E$, we obtain 
\begin{equation*}
	E(\phi_{+},\phi_{-}) + \frac{\nu^2}{4} \rho = - N(\phi_{+},\phi_{-})<0.
\end{equation*}
Precisely, we need a cut-off function since the Bessel function is not in $L^2(0,\infty;rdr)$. 
Indeed, the function $(e^{im\theta}J_{m}(\nu r), -e^{i(m+1)\theta}J_{m+1}(\nu r))$ comes from the essential spectrum of the linear operator of the equation and is so-called resonance (see Appendix \ref{app;A}). Stuart \cite{Stu97} considered the stationary solution bifurcating from the essential spectrum in much more general setting, but the $L^2$-critical nonlinearity was not treated (compare the condition (K) in \cite{Stu97} with Lemma \ref{lem;GN} below).  On the other hand, we have $E^{\mathrm{lin}}(u_{+},u_{-}) + \frac{\nu^2}{4} \rho \geq 0$ for any $(u_{+},u_{-})$. Thus we obtain $\mathcal{E}(\rho) <  \mathcal{E}^{\mathrm{lin}}(\rho)$ and this removes the vanishing case. 
To remove the dichotomy case, we will show the sub-additivity 
\begin{equation*}
	\mathcal{E}(\rho) \leq \mathcal{E}(\eta) + \mathcal{E}(\rho-\eta) 
\end{equation*}
for $0<\eta <\rho$ by the standard argument of Lions \cite{Lio84}.  We also apply the argument that the attainability of $\mathcal{E}(\eta)$ upgrades the sub-additivity to the strict sub-additivity. The strict version eliminates the dichotomy case. Tightness gives the strong convergence of the minimizing sequence and the limit function is the minimizer. 
Let us note that we can use a compactness embedding for Theorem \ref{thm;sv} since semi-vortex is similar to the radial case and thus the proof is simpler, while we cannot use the compactness for Theorem \ref{thm;gs}. We will apply Lieb's compactness and the Brezis--Lieb lemma for Theorem \ref{thm;gs}. 

Theorem \ref{thm;ge} is the consequence of the Gaglirado--Nirenberg inequality and the energy conservation. Theorem \ref{thm;stab} is the standard result coming from the formulation of the energy minimizing problem under the mass constraint. Theorem \ref{thm;mm} follows from direct calculations.

\subsection{Related Works}
\label{sec1.4}

The single cubic NLS on the plane
\begin{equation}
\label{singNLS}
	i \partial_t u + \Delta u +|u|^{2}u =0, \quad (t,x)\in \mathbb{R} \times \mathbb{R}^2
\end{equation}
has the ground state standing wave solution $e^{i\omega t}Q_{\omega}$ ($\omega >0$). Here $Q_{\omega}$ is the unique positive radial least-energy solution to the nonlinear elliptic equation:
\begin{equation*}
	-\Delta Q_{\omega} + \omega Q_{\omega} - Q_{\omega}^{3} =0. 
\end{equation*}
See \cite{BeLi83_1,BeLi83_2,Str77,Kwo89}. The mass $\|Q_{\omega}\|_{L^2}^2$ of the ground states is independent of $\omega$ by the scaling structure and is not small. It is known that the ground states are strongly unstable (see e.g. \cite[Theorem .8.2.1]{Caz03}). See also \cite{GSS87} for the stability in more general setting. Dodson \cite{Dod16_2} showed that all the solution below the ground state scatter (see also \cite{Dod15,Dod16_1}). 
The equation \eqref{singNLS} has the vortex solutions $e^{i\omega t}e^{im \theta}\phi_{\omega,m}(r)$ ($m \in \mathbb{Z}$). Here, $\phi_{\omega,m}$ satisfies the following elliptic equation:
\begin{equation*}
	\phi'' + \frac{1}{r}\phi' -\left( \omega + \frac{m^2}{r^2}\right) \phi +|\phi|^2\phi=0.
\end{equation*}
See e.g. \cite{IaWa95}. Of course, the mass of the vortex solution is larger than that of the ground state. See e.g. \cite{Miz07} for the stability of the vortex solutions. See also the book \cite{Fib15}. 

The weakly coupled cubic NLS system 
\begin{equation*}
	\begin{cases}
	i \partial_t u + \Delta u +\lambda_{+}|u|^{2}u +\lambda_{0}|v|^{2}u =0, \\
	i \partial_t v + \Delta v +\lambda_{-}|v|^{2}v +\lambda_{0} |u|^{2}v  =0,
	\end{cases}
	\quad (t,x)\in \mathbb{R} \times \mathbb{R}^2
\end{equation*}
is also well studied. It also has the ground state and it is not small as well as the single case. Recently, it is investigated whether the ground state is vector or scalar. See e.g. \cite{LiWe05}. 

The above equations do not have small non-scattering solution. That is, small data scattering results for the above equations hold. On the other hand, NLS with a potential may have small non-scattering global solutions which comes from the negative eigenvalue of the linear operator with the potential. For example, we have such a result for
\begin{equation*}
	i \partial_t u + \Delta u + V u + |u|^{4/d}u =0, \quad (t,x) \in \mathbb{R} \times \mathbb{R}^d
\end{equation*}
with 
\begin{itemize}
\item $d \geq 1$, $V(x)= \gamma/|x|^{\alpha}$, $\gamma>0$, $0<\alpha < \min\{2,d\}$,
\item $d=1$, $V(x)= \gamma \delta_0$, $\gamma >0$.
\end{itemize}
See e.g. \cite{FOO08,LiZh20} and references therein. 

Even though it has no potential, it is possible to have small non-scattering global solution if the dispersion is weaker. For example, Bellazini et.al. \cite{BGLV19,BGLV21} shows that the half-wave equation 
\begin{equation*}
	i\partial_t u + \sqrt{-\Delta} u +|u|^{p-1}u=0, \quad (t,x) \in \mathbb{R} \times \mathbb{R}^d, 
\end{equation*}
where $1<p<(d+1)/(d-1)$ (regarded as $p<\infty$ when $d=1$), has small data non-scattering global solutions because of the velocity boost. More precisely, it has the traveling wave solution $u=e^{i\omega t}Q_{v}(x-vt)$ for $v \in \mathbb{R}^d$ with $|v|<1$ and $\|Q_{v}\|_{H^{1/2}} \to 0$ as $|v|\to 1-0$. 
See also \cite{FJL07}. 

The existence of the optical vortices for NLS systems on a bounded domain are investigated by Zhang and Liu \cite{ZhLi17} and Medina \cite{Med23}. The former paper investigates the solution with $(u_{1},u_{2}) = (e^{ik_1 t}e^{n_1 \theta}\varphi_{1}(r),e^{ik_2 t}e^{n_2 \theta}\varphi_{2}(r))$ for 
\begin{equation*}
	\begin{cases}
	i \partial_t u_1 + \Delta u_1 + \lambda_{11} |u_1|^2u_1 +\lambda_{12} |u_2|^2u_1=0,
	\\
	i \partial_t u_2 + \Delta u_2 + \lambda_{21} |u_1|^2u_2 +\lambda_{22} |u_2|^2u_2=0,
	\end{cases}
	\text{ in the ball } B_R
\end{equation*}
with Dirichlet zero boundary condition. The later paper shows the existence of the solution with the form $(u_{1},u_{2}) = (e^{ikt}e^{n \theta}\varphi_{1}(r),e^{i2kt}e^{2n \theta}\varphi_{2}(r))$ for 
\begin{equation*}
	\begin{cases}
	i \partial_t u_1 + \frac{1}{2} \Delta u_1 + \overline{u_1}u_2=0,
	\\
	i \partial_t u_2 +\frac{1}{4} \Delta u_2 +\frac{1}{2}u_1^2=\beta u_2,
	\end{cases}
	\text{ in the ball } B_R
\end{equation*}
where $\beta \in \mathbb{R}$, with Dirichlet zero boundary condition. Their proof relies on  the compactness embedding since the domain is bounded. 

Wang, et. al. \cite{WYYZ26p} showed the existence of the (energy) ground states of the following three-component NLS system with spin-orbit on $d$-dimensional ($d=2,3$) space: 
\begin{align*}
 	&i\partial_t \psi_1(t,\mathbf{x})  = \left[\mathcal{H}_0 + c_1 \left(|\psi_0|^2 + |\psi_1|^2 - |\psi_{-1}|^2\right)\right] \psi_1 + c_1 \bar{\psi}_{-1}\psi_0^2 - \gamma L_0\psi_0, \\ 
 	&i\partial_t \psi_0(t,\mathbf{x})  = \left[\mathcal{H}_0 + c_1 \left(|\psi_1|^2+|\psi_{-1}|^2\right)\right] \psi_0 + 2 c_1 \psi_{-1} \bar{\psi}_0 \psi_1 - \gamma \left(L_0\psi_{-1}+L_1\psi_1\right), \\
 	&i\partial_t \psi_{-1}(t,\mathbf{x})  = \left[\mathcal{H}_0 + c_1 \left(|\psi_0|^2 + |\psi_{-1}|^2 - |\psi_{1}|^2\right)\right] \psi_{-1} + c_1 \psi_0^2 \bar{\psi}_{1} - \gamma L_1\psi_0,
\end{align*}
where $\mathbf{x} = (x, y)^\top$ if $d = 2$ and $\mathbf{x} = (x, y, z)^\top$ if $d = 3$ and $\mathcal{H}_0 = -\frac{1}{2}\Delta + V(\mathbf{x}) + c_0 \rho - \Omega L_z$, $c_0>0$, $c_1>0$, and  $L_z=-i\left(x \partial_y - y \partial_x \right)$. $\Omega$ and $\gamma$ denote the rotation speed and spin-orbit coupling strength, respectively. 
$V(\mathbf{x})$ is e.g. a harmonic potential with the following form:
\begin{equation*}
 	V\left(\mathbf{x}\right) = \frac{1}{2}\left\{\begin{aligned}
 		&\gamma_x^2 x^2 + \gamma_y^2 y^2, \quad &d = 2,\\
 		&\gamma_x^2 x^2 + \gamma_y^2 y^2 + \gamma_z^2 z^2, &d=3.
 	\end{aligned}
 	\right.
\end{equation*}
Here, $L_0 = i \partial_x + \partial_y$ and $L_1 = i \partial_x - \partial_y$ are the spin-orbit coupling operators, which are same as ours when $d=2$. The energy function space is with the wighted space $L_{V}^2(\mathbb{R}^d)$ to treat the potential $V$. It helps to show the existence of the ground state because of the compactness embedding $H^1(\mathbb{R}^d) \cap L_{V}^2(\mathbb{R}^d) \hookrightarrow L^q(\mathbb{R}^d)$. Thus, their argument is not directly applicable to our problem. They also investigated ground state patterns and structures numerically. 

Bao and Cai \cite{BaCa15} investigate the (energy) ground states of the following NLS system with another type SO:
\begin{equation*}
	\begin{cases}
	i\partial_t \psi_1 + \frac{1}{2} \Delta \psi_1 = V\psi_1 + i k_0\partial_{x_1} \psi_1 + \frac{\delta}{2}\psi_1 + g_{11}|\psi_1|^2\psi_1 + g_{12}|\psi_2|^2 \psi_1 + \frac{\Omega}{2} \psi_2 \\
	i\partial_t \psi_2 + \frac{1}{2} \Delta \psi_2 = V \psi_2 - i k_0\partial_{x_1} \psi_2 - \frac{\delta}{2}\psi_2 + g_{21}|\psi_1|^2\psi_2 + g_{22}|\psi_2|^2 \psi_2 + \frac{\Omega}{2} \psi_1 
	\end{cases}	
\end{equation*}
for $x=(x_1,...,x_d) \in \mathbb{R}^d$, where $V(x) = \frac{1}{2} \sum_{j=1}^{d} \gamma_j x_j^2$ ($\gamma_j>0$) and $d=1,2,3$. Here, $k_0$ is the SO coupling strength, $\delta$ is the detuning constant for Raman transition, and $\Omega$ describes the strength of Raman coupling. It is worth emphasizing that the spin-orbit term $i k_0\partial_{x_1} \psi_1$ and $-i k_0\partial_{x_1} \psi_2$ in the above equation is not swapped unlike \eqref{NLSSO}. 
They show that the ground state. 
Their proof is also based on the compactness embedding $H^1(\mathbb{R}^2) \cap H^{0,1}(\mathbb{R}^2) \hookrightarrow L^q(\mathbb{R}^2)$. 
As an interesting point of their paper, they also investigate the limit profile for $k_0$ and $\Omega$. We do not pursue such problems for \eqref{NLSSO} in the present paper.

Recently, the stationary states of the rotating NLS system is also investigated. The equation is
\begin{equation*}
	\begin{cases}
	-\frac{1}{2}\Delta u + V u -(\Omega\cdot L) u - a_1|u|^2u - b|v|^2u = \omega_1 u, 
	\\
	-\frac{1}{2}\Delta v + V v -(\Omega\cdot L) v- a_2|v|^2v - b|u|^2v = \omega_2 u, 
	\end{cases}
	x \in \mathbb{R}^2,
\end{equation*}
where $V = \frac{1}{2} \sum_{j=1}^{d} \gamma_j x_j^2$ ($\gamma_j>0$). The rotation is $\Omega \cdot L \coloneqq  -i|\Omega| (x_1 \partial_{x_2} - x_2 \partial_{x_1})$ with the rotational speed $|\Omega|>0$. Hajaiej, Luo, and Yang \cite{HLY25} showed the existence of the solution by the energy minimization problem under the mass constraint. (They also consider the three dimensional case.) The rotation term $\Omega \cdot L$ includes the weight and the equation also has the harmonic potential and thus the compactness embedding $H^1(\mathbb{R}^2) \cap H^{0,1}(\mathbb{R}^2) \hookrightarrow L^q(\mathbb{R}^2)$ is helpful to show the existence of the minimizer.

Ignoring the Laplacian terms in \eqref{NLSSO}, the equation is the Dirac equation on $\mathbb{R}^2$. The stationary solution of the Dirac equation is studied by e.g. \cite{Bor17}. It is more difficult than \eqref{NLSSO} in the sense that the linear part of the energy is not positive definite. Our equation \eqref{NLSSO} can be treated as variational problem because of the positivity. 

\begin{notation}
For $(u_{+},u_{-}) \in H^1(\mathbb{R}^2) \times H^1(\mathbb{R}^2)$, we set
\begin{align*}
	V_{\mathrm{SO}}(u_{+},u_{-}) 
	&\coloneqq  \frac{\nu}{2} \re \int_{\mathbb{R}^2} ( \overline{u_{+}}D^{-}u_{-} - \overline{u_{-}}D^{+}u_{+} ) dz \\
	N(u_{+},u_{-}) 
	&\coloneqq  \frac{1}{4}  \int_{\mathbb{R}^2} \lambda_{+}|u_{+}|^4 + \lambda_{-}|u_{-}|^4+2\lambda_{0}|u_{+}|^2|u_{-}|^2 dz
	\\
	E^{\mathrm{lin}}(u_{+},u_{-})  &\coloneqq  E(u_{+},u_{-}) + N(u_{+},u_{-}). 
\end{align*}
It is easy to see that $E(e^{im\theta}u_{+}(r),e^{i(m+1)\theta}u_{-}(r)) =E_{m}(u_{+},u_{-})$ for $u_{\pm}\colon (0,\infty)\to \mathbb{C}$, where $(r,\theta)$ is the polar coordinate of $(x,y)$. Therefore, we also use the notations $E_{m}^{\mathrm{lin}}$ and $\mathcal{E}_{m}^{\mathrm{lin}}$, which are defined in the similar way to $E^{\mathrm{lin}}$ and $\mathcal{E}^{\mathrm{lin}}$ under the above relation. 

For $1\leq p <\infty$, we set
\begin{equation*}
	L_r^p = L_r^p(0,\infty) \coloneqq  \{ f \colon [0,\infty) \to \mathbb{C} \semicolon f \text{ is measurable and } \|f\|_{L_r^p} < \infty\}
\end{equation*}
where
\begin{equation*}
	\|f\|_{L_r^p}^p \coloneqq  \int_{0}^{\infty} |f(r)|^p r dr. 
\end{equation*}



For $l \in \mathbb{Z}$, we set the Hilbert space $H_{l}$ by
\begin{equation*}
	H_{l} \coloneqq   \{ f \colon [0,\infty) \to \mathbb{C} \semicolon  f \text{ is measurable and } f, f', \frac{l}{r}f \in L_r^2 \}
\end{equation*}
with the inner product 
\begin{equation*}
	(f,g)_{H_{l}} \coloneqq  \re \int_{0}^{\infty} \left(f'(r)\overline{g'(r)} + \frac{l^2}{r^2}f(r) \overline{g(r)} + f(r)\overline{g(r)} \right) r dr. 
\end{equation*}
We set $\mathcal{H}_{l} \coloneqq  H_{l} \times H_{l+1}$. 
%
%
%

\end{notation}

\section{Proofs}

\subsection{Proof of Theorems \ref{thm;sv} and \ref{thm;gs}}

We only give a proof of Theorem \ref{thm;gs} and do not give that of Theorem \ref{thm;sv} because it is very similar to that of Theorem \ref{thm;gs}. In fact, it is easier than that of Theorem \ref{thm;gs} since we can use the compactness embedding like $H_{\mathrm{rad}}^1(\mathbb{R}^2) \hookrightarrow L^4(\mathbb{R}^2)$.


The linear part $E^{\mathrm{lin}}$ of the energy is rewritten by
\begin{equation}
\label{eq;linE}
	E^{\mathrm{lin}}(u_{+},u_{-}) = \frac{1}{4}\| D^{-}u_{-} + \nu u_{+} \|_{L^2}^2 + \frac{1}{4}\| D^{+}u_{+} -\nu u_{-} \|_{L^2}^2 - \frac{\nu^2}{4}M(u_{+},u_{-}).
\end{equation}
We set 
\begin{equation*}
	\mathcal{E}^{\mathrm{lin}} (\rho) \coloneqq  \{E^{\mathrm{lin}}(u_{+},u_{-}) \semicolon (u_{+},u_{-}) \in H^1(\mathbb{R}^2) \times H^1(\mathbb{R}^2) 
	, M (u_{+},u_{-}) =\rho\}.
\end{equation*}


We recall the Gagliardo--Nirenberg inequality. 

\begin{lemma}[Gagliardo--Nirenberg inequality]
\label{lem;GN}
We have 
\begin{equation*}
	N(u_{+},u_{-}) \leq C_{\mathrm{GN}} M(u_{+},u_{-}) \|(u_{+},u_{-})\|_{\dot{H}^1 \times \dot{H}^1}^2
\end{equation*}
for $(u_{+},u_{-}) \in H^1(\mathbb{R}^2) \times H^1(\mathbb{R}^2)$, where $C_{\mathrm{GN}}$ is the best constant. 
\end{lemma}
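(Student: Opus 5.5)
We reduce the vector inequality to the scalar two-dimensional Gagliardo--Nirenberg inequality, and then note that the best constant is defined as the optimal one in the vector form. First, by the elementary inequality $2|u_{+}|^2|u_{-}|^2 \le |u_{+}|^4+|u_{-}|^4$, we get
\begin{equation*}
	N(u_{+},u_{-}) \le \frac{\lambda_{\max}}{4}\int_{\mathbb{R}^2} (|u_{+}|^2+|u_{-}|^2)^2\,dz , \qquad \lambda_{\max} \coloneqq \max\{\lambda_{+},\lambda_{-},\lambda_{0}\}.
\end{equation*}
In fact a cleaner bound is $N \le \frac{\lambda_{\max}}{4}\|\,|U|\,\|_{L^4}^4$ with $|U|\coloneqq(|u_{+}|^2+|u_{-}|^2)^{1/2}$. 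This holds because the quartic form $\lambda_{+}a^2+\lambda_{-}b^2+2\lambda_0 ab$ (with $a=|u_+|^2$, $b=|u_-|^2$) is at most $\lambda_{\max}(a+b)^2$.

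Next we apply the scalar Ladyzhenskaya inequality $\|f\|_{L^4}^4 \le C\|f\|_{L^2}^2\|\nabla f\|_{L^2}^2$ on $\mathbb{R}^2$ to the real function $f=|U|$. We have $\|f\|_{L^2}^2=M(u_{+},u_{-})$. By the diamagnetic/Kato-type inequality $|\nabla |U|| \le (|\nabla u_{+}|^2+|\nabla u_{-}|^2)^{1/2}$ a.e., which follows from Cauchy--Schwarz on $\nabla|U|^2 = 2\re(\overline{u_{+}}\nabla u_{+}+\overline{u_{-}}\nabla u_{-})$ wherever $|U|\neq 0$, we also get $\|\nabla f\|_{L^2}^2 \le \|(u_{+},u_{-})\|_{\dot H^1\times\dot H^1}^2$. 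Also $f\in H^1(\mathbb{R}^2)$. Combining these gives $N(u_{+},u_{-}) \le C\, M(u_{+},u_{-})\,\|(u_{+},u_{-})\|_{\dot H^1\times \dot H^1}^2$ with a finite constant $C$ depending only on $\lambda_{\pm},\lambda_0$.

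Finally, $C_{\mathrm{GN}}$ is defined as the supremum of $N/(M\|\cdot\|_{\dot H^1\times\dot H^1}^2)$ over nonzero pairs. It is finite by the previous step, and the inequality then holds with this optimal constant by definition. Existence of an optimizer is not needed for the later results, which use only the inequality and the threshold $(4C_{\mathrm{GN}})^{-1}$.

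The only mildly delicate point is justifying the pointwise gradient bound for $|U|$ on the zero set. We handle this by using $f_\varepsilon=(|U|^2+\varepsilon^2)^{1/2}-\varepsilon$ and letting $\varepsilon\to0$, or simply by citing the standard fact that $|U|\in H^1$ with $\nabla|U|=0$ a.e.\ on $\{U=0\}$. Everything else is routine.
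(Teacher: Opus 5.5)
Your argument is correct. Note that the paper does not prove this lemma: it only recalls the vector Gagliardo--Nirenberg inequality as a standard fact, with $C_{\mathrm{GN}}$ (which depends on $\lambda_{\pm},\lambda_0$) implicitly the optimal constant.

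Your proposal supplies the missing justification that this optimal constant is finite. It combines three ingredients:
the coefficient comparison $\lambda_{+}a^2+\lambda_{-}b^2+2\lambda_0ab\le\lambda_{\max}(a+b)^2$ for $a,b\ge0$;
the diamagnetic bound for $|U|$;
and the scalar Ladyzhenskaya inequality.
Together they give $C_{\mathrm{GN}}\le\frac{\lambda_{\max}}{4}C_{\mathrm{L}}$, with $C_{\mathrm{L}}$ the scalar constant.

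The regularization $f_\varepsilon$ is the cleanest way to run this. Since $0\le f_\varepsilon\uparrow|U|$ as $\varepsilon\downarrow0$ and $|\nabla f_\varepsilon|\le(|\nabla u_{+}|^2+|\nabla u_{-}|^2)^{1/2}$, you can apply Ladyzhenskaya to $f_\varepsilon$ and conclude by monotone convergence. You never need to identify $\nabla|U|$ on $\{U=0\}$.

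If you take the complex-valued scalar inequality as known, a componentwise route avoids the Kato step altogether and gives the same constant. First, $4N\le\lambda_{\max}(\|u_{+}\|_{L^4}^2+\|u_{-}\|_{L^4}^2)^2$ by Cauchy--Schwarz on the cross term. Then use $\|u_{\pm}\|_{L^4}^2\le C_{\mathrm{L}}^{1/2}\|u_{\pm}\|_{L^2}\|\nabla u_{\pm}\|_{L^2}$ and Cauchy--Schwarz in the two-term sum.

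One small slip: the inequality $2|u_{+}|^2|u_{-}|^2\le|u_{+}|^4+|u_{-}|^4$ you invoke first does not give the bound with $\lambda_{\max}/4$, since it would cost up to a factor $2$. The coefficient comparison you state right after is what actually gives it, so that first sentence can be dropped.
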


The Gagliardo--Nirenberg inequality shows the next lemma. 

\begin{lemma}
\label{lem;bdd}
For any $\varepsilon>0$, there exists $C_{\varepsilon}>0$ such that for any $ \rho>0$ and any $(u_{+},u_{-}) \in H^1(\mathbb{R}^2) \times H^1(\mathbb{R}^2)$ satisfying $M(u_{+},u_{-}) =\rho$ it holds that
\begin{equation*}
	E(u_{+},u_{-}) \geq \left( \frac{1}{4} -\varepsilon - C_{\mathrm{GN}}\rho \right) \|(u_{+},u_{-})\|_{\dot{H}^1 \times \dot{H}^1}^2 -C_{\varepsilon} \rho.
\end{equation*}
Especially, if $0< \rho < 1/(4C_{\mathrm{GN}})$, then we have $\mathcal{E}(\rho) > - \infty$.
\end{lemma}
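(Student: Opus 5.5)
We plan to split the energy into its three parts,
\begin{equation*}
	E(u_{+},u_{-}) = \frac{1}{4}\|(u_{+},u_{-})\|_{\dot{H}^1 \times \dot{H}^1}^2 + V_{\mathrm{SO}}(u_{+},u_{-}) - N(u_{+},u_{-}),
\end{equation*}
bound the spin-orbit term by Cauchy--Schwarz and Young, and bound the nonlinear term by Lemma \ref{lem;GN}. For the kinetic part we use that $\|D^{\pm}f\|_{L^2} = \|\nabla f\|_{L^2}$ for $f \in H^1(\mathbb{R}^2)$. This follows from Plancherel, since the symbol of $D^{\pm}$ is $i(\xi_1 \pm i\xi_2)$ and its modulus is $|\xi|$. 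Hence the first integral in $E$ equals $\frac14\|(u_{+},u_{-})\|_{\dot{H}^1\times\dot{H}^1}^2$.

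\textbf{Spin-orbit term.} Using $\|D^{\mp}u_{\mp}\|_{L^2} = \|\nabla u_{\mp}\|_{L^2}$, we get
\begin{equation*}
	|V_{\mathrm{SO}}(u_{+},u_{-})| \leq \frac{\nu}{2}\left( \|u_{+}\|_{L^2}\|\nabla u_{-}\|_{L^2} + \|u_{-}\|_{L^2}\|\nabla u_{+}\|_{L^2}\right).
\end{equation*}
Applying $ab \leq \varepsilon a^2 + \frac{1}{4\varepsilon}b^2$ with $a = \|\nabla u_{\mp}\|_{L^2}$ and $b = \frac{\nu}{2}\|u_{\pm}\|_{L^2}$, this is at most
\begin{equation*}
	\varepsilon \|(u_{+},u_{-})\|_{\dot{H}^1\times\dot{H}^1}^2 + \frac{\nu^2}{16\varepsilon} M(u_{+},u_{-}).
\end{equation*}
So we may take $C_\varepsilon = \nu^2/(16\varepsilon)$, which is independent of $\rho$ and of $(u_{+},u_{-})$.

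\textbf{Nonlinear term.} Lemma \ref{lem;GN} with $M(u_{+},u_{-}) = \rho$ gives $N(u_{+},u_{-}) \leq C_{\mathrm{GN}}\rho \|(u_{+},u_{-})\|_{\dot{H}^1\times\dot{H}^1}^2$. Combining the three parts yields the claimed inequality
\begin{equation*}
	E(u_{+},u_{-}) \geq \left( \frac{1}{4} -\varepsilon - C_{\mathrm{GN}}\rho \right) \|(u_{+},u_{-})\|_{\dot{H}^1 \times \dot{H}^1}^2 -C_{\varepsilon} \rho.
\end{equation*}

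\textbf{Consequence.} If $0<\rho<1/(4C_{\mathrm{GN}})$, choose $\varepsilon = \frac{1}{4} - C_{\mathrm{GN}}\rho > 0$. Then the coefficient of the $\dot{H}^1$ norm vanishes, so $E(u_{+},u_{-}) \geq -C_\varepsilon \rho$ for every admissible pair. Taking the infimum gives $\mathcal{E}(\rho) \geq -C_\varepsilon\rho > -\infty$.

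There is no real obstacle. The only points needing care are the identity $\|D^{\pm} f\|_{L^2} = \|\nabla f\|_{L^2}$ and making sure $C_\varepsilon$ does not depend on $\rho$. Alternatively, identity \eqref{eq;linE} gives directly $E^{\mathrm{lin}} \geq -\frac{\nu^2}{4}\rho$, but that alone does not control the kinetic part. The Young's inequality route gives the stated form.
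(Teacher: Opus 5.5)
Your proposal is correct and follows the paper's proof: you bound $N$ by the Gagliardo--Nirenberg inequality (Lemma \ref{lem;GN}) and $V_{\mathrm{SO}}$ by Cauchy--Schwarz and Young, then combine. Beyond what the paper writes out, you spell out that $\|D^{\pm}f\|_{L^2}=\|\nabla f\|_{L^2}$, give the explicit constant $C_\varepsilon=\nu^2/(16\varepsilon)$, and make the choice $\varepsilon=\frac14-C_{\mathrm{GN}}\rho$ in the consequence explicit.
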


\begin{proof}
Let $(u_{+},u_{-}) \in H^1(\mathbb{R}^2) \times H^1(\mathbb{R}^2)$ be an arbitrary function satisfying $M(U)=\rho$. 
By the Gagliardo--Nirenberg inequality (Lemma \ref{lem;GN}), we have
\begin{equation*}
	N(u_{+},u_{-}) 
	\leq C_{\mathrm{GN}} \rho \|(u_{+},u_{-})\|_{\dot{H}^1 \times \dot{H}^1}^2.
\end{equation*}
Moreover, by the Cauchy--Schwarz inequality and the Young inequality, 
\begin{equation*}
	| V_{\mathrm{SO}} (u_{+},u_{-}) | 
	\leq \varepsilon \|(u_{+},u_{-})\|_{\dot{H}^1 \times \dot{H}^1}^2 + C_{\varepsilon} \rho. 
\end{equation*}
Combining these inequalities, we obtain 
\begin{align}
	&E(u_{+},u_{-}) \\
	&\geq  \frac{1}{4} \|(u_{+},u_{-})\|_{\dot{H}^1 \times \dot{H}^1}^2 - (\varepsilon \|(u_{+},u_{-})\|_{\dot{H}^1 \times \dot{H}^1}^2 + C_{\varepsilon} \rho) -  C_{\mathrm{GN}} \rho  \|(u_{+},u_{-})\|_{\dot{H}^1 \times \dot{H}^1}^2  \\
	&= \left( \frac{1}{4} -\varepsilon - C_{\mathrm{GN}}\rho \right) \|(u_{+},u_{-})\|_{\dot{H}^1 \times \dot{H}^1}^2 -C_{\varepsilon} \rho. 
\end{align}
The second statement follows immediately from this inequality. 
\end{proof}

The following lemma is useful when we exclude the so-called vanishing case.

\begin{lemma}
\label{lem;lin}
We have 
\begin{equation*}
	\mathcal{E}(\rho) < \mathcal{E}^{\mathrm{lin}}(\rho)
\end{equation*}
for $\rho >0$. 
\end{lemma}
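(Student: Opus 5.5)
The plan is to show two things: first that $\mathcal{E}^{\mathrm{lin}}(\rho) = -\frac{\nu^2}{4}\rho$, and then that some admissible pair has $E < -\frac{\nu^2}{4}\rho$ at mass $\rho$. The lower bound $\mathcal{E}^{\mathrm{lin}}(\rho)\ge -\frac{\nu^2}{4}\rho$ is immediate from the identity \eqref{eq;linE}, since the two squared norms there are nonnegative. Everything then reduces to constructing, for each $\rho>0$, a pair $(u_{+},u_{-})$ with $M=\rho$ and
\begin{equation*}
	E^{\mathrm{lin}}(u_{+},u_{-}) + \frac{\nu^2}{4}\rho < N(u_{+},u_{-}).
\end{equation*}
This gives $\mathcal{E}(\rho) \le E(u_{+},u_{-}) < -\frac{\nu^2}{4}\rho \le \mathcal{E}^{\mathrm{lin}}(\rho)$.

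For the test function I will use the resonance. Fix $m$ (say $m=0$) and set $\Phi=(\phi_{+},\phi_{-})\coloneqq (e^{im\theta}J_m(\nu r), -e^{i(m+1)\theta}J_{m+1}(\nu r))$. Using $D^{\pm}=e^{\pm i\theta}(\partial_r \pm \frac{i}{r}\partial_\theta)$ and the Bessel recurrences $J_{m+1}'+\frac{m+1}{r}J_{m+1}=J_m$ and $J_m'-\frac{m}{r}J_m=-J_{m+1}$, I check that $D^{-}\phi_{-}+\nu\phi_{+}=0$ and $D^{+}\phi_{+}-\nu\phi_{-}=0$ pointwise. Take a cutoff $\chi\in C_c^\infty(\mathbb{R}^2)$ that is radial, equals $1$ on $B_1$ and is supported in $B_2$. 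Put $u_R \coloneqq a_R\,\chi(z/R)\Phi$, with $a_R>0$ chosen so that $M(u_R)=\rho$. Since the operators annihilate $\Phi$, only derivatives falling on the cutoff survive:
\begin{equation*}
	D^{-}u_{R,-}+\nu u_{R,+} = a_R R^{-1}(D^{-}\chi)(z/R)\phi_{-},
\end{equation*}
and similarly for the other component.

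The rest is asymptotic bookkeeping, based on $J_l(r)=\sqrt{2/(\pi r)}\cos(r-\frac{l\pi}{2}-\frac{\pi}{4})+O(r^{-3/2})$. This gives $\int_{|z|<2R}|\Phi|^2\,dz \sim R$ (indeed $J_m^2+J_{m+1}^2\approx \frac{2}{\pi\nu r}$), so $a_R^2\sim \rho/R$. The error terms in \eqref{eq;linE} are supported in the annulus $R\le |z|\le 2R$ and satisfy
\begin{equation*}
	E^{\mathrm{lin}}(u_R)+\frac{\nu^2}{4}\rho \lesssim a_R^2 R^{-2}\int_{R\le|z|\le 2R}|\Phi|^2\,dz \sim \rho R^{-2}.
\end{equation*}
For the nonlinear term I only keep $\lambda_{+}|u_{+}|^4$ and bound
\begin{equation*}
	N(u_R) \gtrsim a_R^4 \int_{1}^{R} J_m(\nu r)^4\, r\,dr \gtrsim \frac{\rho^2}{R^2}\log R.
\end{equation*}
The last step uses that $\cos^4$ has positive mean $3/8$, so $\int_1^R r^{-2}\cos^4(\nu r-c)\,r\,dr \gtrsim \log R$ up to $O(1)$ errors. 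Comparing the two estimates, $\rho R^{-2}$ versus $\rho^2R^{-2}\log R$, the logarithm wins for $R$ large, depending on $\rho$. This proves the strict inequality.

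The main obstacle is this last comparison. Both quantities decay like $R^{-2}$, so the argument depends on the logarithmic gain from the $r^{-1/2}$ decay of the Bessel function, which is exactly borderline in $L^4(\mathbb{R}^2)$. I need to make sure the constant in the $O(R^{-2})$ bound on the linear excess is uniform in $R$ and carries no hidden $\log R$. I also need a clean lower bound on $\int_1^R J_m^4\, r\,dr$: I would handle the oscillatory cross terms by integrating by parts against $r^{-1}$, or by restricting to the intervals where $|\cos|\ge 1/2$.
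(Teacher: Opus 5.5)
Your proposal is correct and follows essentially the same route as the paper. Both arguments get the lower bound $\mathcal{E}^{\mathrm{lin}}(\rho)\ge -\frac{\nu^2}{4}\rho$ from \eqref{eq;linE}, use a cut-off Bessel resonance normalized with $a^2\approx\rho/R$, and compare a linear excess of order $R^{-2}$ against a quartic gain of order $R^{-2}\log R$. The only difference is cosmetic: the paper cuts off only $v_{-}=a\chi(r/R)J_{m+1}(\nu r)$ and sets $v_{+}=-\nu^{-1}(v_{-}'+\frac{m+1}{r}v_{-})$, so one square in \eqref{eq;linE} vanishes identically and the other is estimated via Bessel's equation. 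You cut off both components, which makes both residuals simply $a_R R^{-1}(D^{\mp}\chi)(z/R)\phi_{\mp}$ and is slightly cleaner.
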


\begin{proof}
By \eqref{eq;linE}, it is easy to see that
\begin{equation}
\label{eq2A}
	\mathcal{E}^{\mathrm{lin}}(\rho)  + \frac{\nu^2}{4} \rho \geq 0.
\end{equation}

On the other hand, we have 
\begin{equation}
\label{eq2B}
	\mathcal{E}(\rho)  + \frac{\nu^2}{4} \rho < 0.
\end{equation}
To show this, it is enough to prove the following claim. 
\\
\textbf{Claim.}  There exists a function $(u_{+},u_{-})$ in $H^1(\mathbb{R}^2) \times H^1(\mathbb{R}^2)$ with $M(u_{+},u_{-}) =\rho$ such that 
\begin{equation*}
	E(u_{+},u_{-}) + \frac{\nu^2}{4}\rho <  0.
\end{equation*}

\begin{proof}[Flow of the proof of Claim]
We give the flow of the proof here because some of claims are technical. Those technical claims are proven in Section \ref{sec3}. 
Let $m\in \mathbb{Z}$, $R>0$, and $\chi \in C_0^\infty([0,\infty))$ satisfy 
\begin{equation*}
	\chi(r) =
	\begin{cases}
	1 & (0\leq r\leq 1), \\
	0 & (r\geq 2).
	\end{cases}
\end{equation*}
We define 
\begin{align*}
	v_{-}(r)&=v_{-}^{R}(r) \coloneqq  a \chi\left( \frac{r}{R}\right) J_{m+1}(\nu r), \\
	v_{+}(r)&=v_{+}^{R}(r) \coloneqq  -\frac{1}{\nu} \left( \frac{m+1}{r}v_{-}(r) + v_{-}'(r)\right),
\end{align*}
for $r \in (0,\infty)$ and we set $(u_{+},u_{-}) \coloneqq  (e^{im\theta}v_{+},e^{i(m+1)\theta}v_{-})$. 
Here, $J_{k}$ is the Bessel function of the first kind with order $k \in \mathbb{N}_0$ and $a=a(\rho,R)<0$ is chosen so that $M(u_{+},u_{-}) =  \rho$ holds. We remark that $v_{+} = J_{m} + O(R^{-1})$. It is well known that 
\begin{equation}
	\label{eq;Bes}
	cr^{-1} \leq \left| J_l(r) - \left( \frac{2}{\pi r} \right)^{\frac{1}{2}} \cos \left( r - \frac{l}{2}\pi - \frac{\pi}{4} \right) \right| \leq Cr^{-1}
\end{equation}
for large $r>0$ (see \cite[Equation (1) in Section 7.21 (p.199)]{Wat95}). This implies that $|a| \approx \sqrt{\rho/R}$ (see Lemma \ref{lemA.1} below). Moreover, the estimate of $a$ and the asymptotics \eqref{eq;Bes} give
\begin{equation*}
	E^{\mathrm{lin}}(u_{+},u_{-}) + \frac{\nu^2}{4} \rho = \frac{1}{4} \| r^{m} (r^{-m} v_{+})' - \nu v_{-} \|_{L_r^2}^2 \lesssim a^2 R^{-1} \lesssim R^{-2},
\end{equation*} 
where we note that $\|r^{-(m+1)} (r^{m+1}v_{-})' + \nu v_{+} \|_{L_r^2}^2 =0$ by the definition of $v_{+}$. (See Lemma \ref{lemA.2} below.)
Moreover, we also have
\begin{equation*}
	N(u_{+},u_{-})
	\gtrsim  a^4 \log R 
	\gtrsim  R^{-2} \log R
\end{equation*}
(see Lemma \ref{lemA.3} below). Combining these estimates, we obtain 
\begin{equation*}
	E(u_{+},u_{-}) + \frac{\nu^2}{4} \rho \leq C_1 R^{-2} - C_2 R^{-2} \log R
\end{equation*}
for some positive constants $C_1,C_2$. 
The right hand side becomes negative by taking $R$ sufficiently large. The desired claim is shown
\end{proof}
The claim shows \eqref{eq2B} and thus we get the statement by combining \eqref{eq2B} with \eqref{eq2A}.
\end{proof}

\begin{remark}
The proof of Lemma \ref{lem;lin} also implies that $\mathcal{E}_{m}(\rho)<\mathcal{E}_{m}^{\mathrm{lin}}(\rho)$ since 
\begin{equation*}
	E^{\mathrm{lin}}(e^{im\theta}v_{+},e^{i(m+1)\theta}v_{-}) = E_{m}^{\mathrm{lin}}(v_{+},v_{-}). 
\end{equation*}
\end{remark}

The following lemma is known due to \cite{Lio84}.

\begin{lemma}[{\cite[Lemma II.1]{Lio84}}]
\label{lem;Lions}
Let $\rho>0$ be fixed. Assume that $\mathcal{F} \colon [0,\rho] \to \mathbb{R}$ satisfies $\mathcal{F}(\theta \eta) \leq \theta \mathcal{F}(\eta)$ for all $\eta \in (0,\rho)$ and $\theta \in (1,\rho/\eta]$. Then it holds that 
\begin{equation*}
	\mathcal{F}(\rho) \leq \mathcal{F}(\eta) + \mathcal{F}(\rho -\eta) \text{ for all } \eta \in (0,\rho). 
\end{equation*}
In addition, if $\eta_0 \in (0,\rho)$ satisfies $\mathcal{F}(\theta \eta_0) < \theta \mathcal{F}(\eta_0)$ for all $\theta \in (1,\rho/\eta_0]$, then we have
\begin{equation*}
	\mathcal{F}(\rho) <\mathcal{F}(\eta_0) + \mathcal{F}(\rho -\eta_0). 
\end{equation*}
\end{lemma}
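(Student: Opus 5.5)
The plan is to reduce both assertions to elementary inequalities for real numbers. Fix $\rho>0$, $\eta\in(0,\rho)$ and $\theta\in(1,\rho/\eta]$, and write $s=\rho/\eta>1$ and $t=\rho/(\rho-\eta)>1$, so that $1/s+1/t=1$. Both $s$ and $t$ lie in the admissible range of the hypothesis. Indeed, $\eta\in(0,\rho)$ and $s=\rho/\eta$ is exactly the maximal allowed value. Also $\rho-\eta\in(0,\rho)$ and $t=\rho/(\rho-\eta)$ is the maximal allowed value for that base point.

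First, apply the hypothesis $\mathcal{F}(\theta\eta)\le\theta\mathcal{F}(\eta)$ with base point $\eta$ and $\theta=s$. This gives
\begin{equation*}
\mathcal{F}(\rho)\le \frac{\rho}{\eta}\mathcal{F}(\eta), \quad\text{that is,}\quad \frac{\eta}{\rho}\mathcal{F}(\rho)\le \mathcal{F}(\eta).
\end{equation*}
Next, apply it with base point $\rho-\eta$ and $\theta=t$, which gives $\frac{\rho-\eta}{\rho}\mathcal{F}(\rho)\le\mathcal{F}(\rho-\eta)$. Adding the two inequalities, the coefficients on the left sum to $\frac{\eta}{\rho}+\frac{\rho-\eta}{\rho}=1$. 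This yields $\mathcal{F}(\rho)\le\mathcal{F}(\eta)+\mathcal{F}(\rho-\eta)$, which is the first assertion.

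For the strict version, fix $\eta_0$ as in the hypothesis and repeat the argument. Apply the strict inequality at base point $\eta_0$ with $\theta=\rho/\eta_0$, which is allowed since $\rho/\eta_0\in(1,\rho/\eta_0]$. This gives $\frac{\eta_0}{\rho}\mathcal{F}(\rho)<\mathcal{F}(\eta_0)$. At base point $\rho-\eta_0$, use the non-strict hypothesis as before to get $\frac{\rho-\eta_0}{\rho}\mathcal{F}(\rho)\le\mathcal{F}(\rho-\eta_0)$. The sum of a strict and a non-strict inequality is strict, so $\mathcal{F}(\rho)<\mathcal{F}(\eta_0)+\mathcal{F}(\rho-\eta_0)$.

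There is no real obstacle here. The only point to check is that $\rho-\eta$ (respectively $\rho-\eta_0$) lies in $(0,\rho)$ and that the scaling factor equals the endpoint $\rho/(\rho-\eta)$. The hypothesis allows $\theta$ up to and including this endpoint, so the needed scaling is admissible.
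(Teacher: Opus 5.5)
Your proof is correct and complete. The paper gives no proof of this lemma and only cites Lions. The argument usually given there orders the two pieces, say $\eta\ge\rho-\eta$, and chains $\mathcal{F}(\rho)\le\frac{\rho}{\eta}\mathcal{F}(\eta)=\mathcal{F}(\eta)+\frac{\rho-\eta}{\eta}\mathcal{F}(\eta)\le\mathcal{F}(\eta)+\mathcal{F}(\rho-\eta)$. Its last step uses the hypothesis at base point $\rho-\eta$ with the intermediate factor $\theta=\eta/(\rho-\eta)$.

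You instead split $\mathcal{F}(\rho)=\frac{\eta}{\rho}\mathcal{F}(\rho)+\frac{\rho-\eta}{\rho}\mathcal{F}(\rho)$ and scale each piece directly up to $\rho$. This gains two things. First, it removes the case distinction: in the strict case the chaining argument must track whether $\eta_0$ is the larger or the smaller piece and place the strict inequality accordingly. Second, you only use the endpoint factors $\theta=\rho/\eta$ and $\theta=\rho/(\rho-\eta)$. So your argument proves the lemma under the weaker hypothesis that $\mathcal{F}(\rho)\le\frac{\rho}{\eta}\mathcal{F}(\eta)$ for every $\eta\in(0,\rho)$, with strict inequality at $\eta_0$ for the second assertion.
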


To apply  Lemma \ref{lem;Lions}, we show the following. 

\begin{lemma}
\label{lem;suf}
For $\eta>0$ and $\theta>1$, we have
\begin{equation*}
	\mathcal{E} (\theta \eta) \leq \theta \mathcal{E}(\eta). 
\end{equation*}
Moreover, if there exists a minimizer of $\mathcal{E}(\eta)$, then we have the strict inequality
\begin{equation*}
	\mathcal{E} (\theta \eta) < \theta \mathcal{E}(\eta)
\end{equation*}
for all $\theta>1$.
\end{lemma}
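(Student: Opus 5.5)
We will use the amplitude scaling $u\mapsto \sqrt{\theta}\,u$. This preserves the quadratic part of the energy up to the factor $\theta$ and multiplies the quartic part by $\theta^2$. Set $U=(u_{+},u_{-})\in H^1\times H^1$ with $M(U)=\eta$. Then $M(\sqrt{\theta}U)=\theta\eta$ and
\begin{equation*}
	E(\sqrt{\theta}U) = \theta E^{\mathrm{lin}}(U) - \theta^2 N(U) = \theta E(U) - (\theta^2-\theta) N(U).
\end{equation*}
Since $\theta>1$ and $N(U)\ge 0$, this gives $\mathcal{E}(\theta\eta)\le E(\sqrt{\theta}U)\le \theta E(U)$. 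Taking the infimum over all admissible $U$ yields $\mathcal{E}(\theta\eta)\le\theta\mathcal{E}(\eta)$. No smallness of $\eta$ is needed here. If $\mathcal{E}(\eta)=-\infty$, the inequality is trivially true under the convention $\theta\cdot(-\infty)=-\infty$, since the scaling carries a sequence with $E\to-\infty$ to another such sequence.

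For the strict inequality, let $Q=(Q_{+},Q_{-})$ be a minimizer of $\mathcal{E}(\eta)$. The same identity gives
\begin{equation*}
	\mathcal{E}(\theta\eta)\le E(\sqrt{\theta}Q) = \theta\mathcal{E}(\eta) - (\theta^2-\theta)N(Q).
\end{equation*}
It therefore suffices to show $N(Q)>0$. Since $\lambda_\pm,\lambda_0>0$, we have $N(Q)\ge \frac{\lambda_{+}}{4}\|Q_+\|_{L^4}^4+\frac{\lambda_{-}}{4}\|Q_-\|_{L^4}^4$. Moreover $M(Q)=\eta>0$ forces $Q\neq 0$, so at least one component is a nonzero function and hence has positive $L^4$ norm. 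Thus $N(Q)>0$, and the factor $\theta^2-\theta>0$ makes the inequality strict.

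The only point requiring care is that $N(Q)>0$ must come from $Q\neq0$ itself, and not from any comparison with the linear energy. Lemma \ref{lem;lin} could also be used: it gives $E(Q)=\mathcal{E}(\eta)<\mathcal{E}^{\mathrm{lin}}(\eta)\le E^{\mathrm{lin}}(Q)$, hence $N(Q)>0$. The direct argument above is simpler, and I expect no real obstacle in this lemma beyond that observation.
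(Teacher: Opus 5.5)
Your proof is correct and takes the same approach as the paper: scale a minimizing sequence (or the minimizer itself) by $\sqrt{\theta}$ and use $E(\sqrt{\theta}U)=\theta E^{\mathrm{lin}}(U)-\theta^{2}N(U)<\theta E(U)$. You also state explicitly why $N(Q)>0$ (because $M(Q)=\eta>0$ forces $Q\neq 0$), a step the paper uses without comment.
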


\begin{proof}
Take a sequence $\{(u_{+,n},u_{-,n})\}_{n=1}^{\infty} \in H^1(\mathbb{R}^2) \times H^1(\mathbb{R}^2)$ such that $M(u_{+,n},u_{-,n}) =\eta$ and
\begin{equation*}
	E(u_{+,n},u_{-,n}) \to \mathcal{E}(\eta) \text{ as } n \to \infty.
\end{equation*}
Then $(\sqrt{\theta}u_{+,n},\sqrt{\theta}u_{-,n})$ satisfies  
\begin{equation*}
	M(\sqrt{\theta}u_{+,n},\sqrt{\theta}u_{-,n}) =\theta \eta
\end{equation*}
and 
\begin{align*}
	E(\sqrt{\theta}u_{+,n},\sqrt{\theta}u_{-,n}) 
	&=\theta E^{\mathrm{lin}}(u_{+,n},u_{-,n}) - \theta^2 N(u_{+,n},u_{-,n}) \\
	&< \theta E^{\mathrm{lin}}(u_{+,n},u_{-,n}) - \theta N(u_{+,n},u_{-,n}) \\
	&= \theta E(u_{+,n},u_{-,n})
\end{align*}
since $\theta>1$. Therefore, we obtain
\begin{equation*}
	\mathcal{E}(\theta \eta) <  \theta E(u_{+,n},u_{-,n}).
\end{equation*}
Taking the limit, we obtain $\mathcal{E}(\theta \eta) \leq \theta \mathcal{E}(\eta)$. If we have a minimizer of
$\mathcal{E}(\eta)$, then we do not need to take a sequence and the limit. Thus we get the strict inequality. 
\end{proof}

Using Lemmas \ref{lem;suf} and \ref{lem;Lions}, we get the following. 

\begin{lemma}
\label{lem;subadd}
It holds that 
\begin{equation*}
	\mathcal{E}(\rho) \leq \mathcal{E}(\eta) + \mathcal{E}(\rho-\eta) 
\end{equation*}
for $0<\eta <\rho$. Moreover, if there exists a minimizer of $\mathcal{E}(\eta)$, then we have
\begin{equation*}
	\mathcal{E}(\rho) < \mathcal{E}(\eta) + \mathcal{E}(\rho-\eta).
\end{equation*}
\end{lemma}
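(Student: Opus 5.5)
The plan is to apply Lemma \ref{lem;Lions} with $\mathcal{F}=\mathcal{E}$ on $[0,\rho]$, using Lemma \ref{lem;suf} to verify its hypotheses. Lemma \ref{lem;Lions} needs $\mathcal{F}$ to be real-valued on $[0,\rho]$. For $\rho<1/(4C_{\mathrm{GN}})$ this holds by Lemma \ref{lem;bdd}. For general $\rho$ we only need finiteness of the quantities appearing in the inequality. If $\mathcal{E}(\eta)$ or $\mathcal{E}(\rho-\eta)$ equals $-\infty$, the first inequality is trivial. Note also that $\mathcal{E}<+\infty$ always, since admissible functions exist. We set $\mathcal{E}(0)=0$ (the value at $0$ plays no role). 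Where needed we restrict attention to the range $\rho<(4C_{\mathrm{GN}})^{-1}$ relevant for the theorems.

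\textbf{The non-strict inequality.} Lemma \ref{lem;suf} gives $\mathcal{E}(\theta\eta)\le\theta\mathcal{E}(\eta)$ for every $\eta>0$ and every $\theta>1$, in particular for $\theta\in(1,\rho/\eta]$. The first part of Lemma \ref{lem;Lions} then yields $\mathcal{E}(\rho)\le\mathcal{E}(\eta)+\mathcal{E}(\rho-\eta)$ for all $\eta\in(0,\rho)$. For completeness, the mechanism is as follows. Assume without loss of generality $\eta\ge\rho-\eta$. Apply the inequality with $\theta=\rho/\eta$ to get $\mathcal{E}(\rho)\le\frac{\rho}{\eta}\mathcal{E}(\eta)=\mathcal{E}(\eta)+\frac{\rho-\eta}{\eta}\mathcal{E}(\eta)$. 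Apply it again with $\theta=\eta/(\rho-\eta)\ge1$ at the point $\rho-\eta$ to get $\frac{\rho-\eta}{\eta}\mathcal{E}(\eta)\le\mathcal{E}(\rho-\eta)$. When $\theta=1$ this is an equality.

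\textbf{The strict inequality.} Suppose $\mathcal{E}(\eta)$ has a minimizer. The second part of Lemma \ref{lem;suf} gives $\mathcal{E}(\theta\eta)<\theta\mathcal{E}(\eta)$ for all $\theta>1$. The second part of Lemma \ref{lem;Lions} with $\eta_0=\eta$ then gives the strict inequality directly.

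\textbf{Main subtlety.} The main point to check is the ordering. If $\eta\ge\rho-\eta$, the strict inequality enters in the first step, at the point $\eta$, and so transfers to the sum. If instead $\eta<\rho-\eta$, the chain starts from $\rho-\eta$, where no minimizer is assumed. In that case I would scale from $\eta$ instead:
\begin{equation*}
\mathcal{E}(\rho)\le\tfrac{\rho}{\rho-\eta}\mathcal{E}(\rho-\eta)=\mathcal{E}(\rho-\eta)+\tfrac{\eta}{\rho-\eta}\mathcal{E}(\rho-\eta),
\end{equation*}
combined with
\begin{equation*}
\mathcal{E}(\rho-\eta)=\mathcal{E}\bigl(\tfrac{\rho-\eta}{\eta}\eta\bigr)<\tfrac{\rho-\eta}{\eta}\mathcal{E}(\eta).
\end{equation*}
Together these give $\frac{\eta}{\rho-\eta}\mathcal{E}(\rho-\eta)<\mathcal{E}(\eta)$, and hence strictness. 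This is exactly the content of Lions' lemma with $\eta_0=\eta$, so citing it suffices.
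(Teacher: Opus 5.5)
Your proposal is correct and takes the same approach as the paper, which obtains the lemma by feeding Lemma \ref{lem;suf} into Lions' Lemma \ref{lem;Lions} (with $\eta_0=\eta$ for the strict part); your case analysis for $\eta<\rho-\eta$ in the strict inequality is also correct. One small correction to your preliminary remark: if $\mathcal{E}(\eta)$ or $\mathcal{E}(\rho-\eta)$ is $-\infty$, the non-strict inequality is not automatic, but it holds because Lemma \ref{lem;suf} forces $\mathcal{E}(\rho)=-\infty$ too. The strict version really does require finiteness, which is why your restriction to $\rho<(4C_{\mathrm{GN}})^{-1}$ is needed.
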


We use the following compactness lemmas, whose proofs are omitted. 

\begin{lemma}[Lieb's compactness theorem, \cite{Lie83}]
\label{Liebcpt}
Let  $\{(u_{+,n},u_{-,n})\}_{n=0}^{\infty}$ be a bounded sequence in $H^1(\mathbb{R}^2) \times H^1(\mathbb{R}^2)$. 
Assume that $\inf_{n \in \mathbb{N}_0} (\|u_{+,n}\|_{L^4}^4 +\|u_{-,n}\|_{L^4}^4 ) >0$. Then there exist a subsequence, which is also denoted by $\{(u_{+,n},u_{-,n})\}$, a sequence $\{z_n\}_{n=0}^{\infty} =\{(x_n,y_n)\}_{n=0}^{\infty} \subset \mathbb{R}^2$, and $(Q_{+},Q_{-}) \in H^1(\mathbb{R}^2)\times H^1(\mathbb{R}^2) \setminus \{(0,0)\}$ such that 
\begin{equation*}
	(u_{+,n}(\cdot - z_n) , u_{-,n}(\cdot - z_n) ) \rightharpoonup (Q_{+},Q_{-})
	\text{ weakly in } H^1(\mathbb{R}^2)\times H^1(\mathbb{R}^2).
\end{equation*}
\end{lemma}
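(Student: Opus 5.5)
The plan is to reduce the statement to the classical non-vanishing argument of Lions (which Lieb's theorem also rests on), applied to the pair $(u_{+,n},u_{-,n})$ as a single $\mathbb{C}^2$-valued function. Set $U_n \coloneqq (u_{+,n},u_{-,n})$ and $|U_n|^2 \coloneqq |u_{+,n}|^2+|u_{-,n}|^2$. By hypothesis there is $K>0$ with $\|U_n\|_{H^1\times H^1}\le K$ for all $n$. There is also $\delta>0$ with $\|u_{+,n}\|_{L^4}^4+\|u_{-,n}\|_{L^4}^4\ge\delta$ for all $n$.

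\textbf{Step 1: a localized Gagliardo--Nirenberg estimate.} Cover $\mathbb{R}^2$ by the unit squares $Q_k = k+[0,1)^2$, $k\in\mathbb{Z}^2$. On each square the two-dimensional inequality $\|f\|_{L^4(Q_k)}^4 \le C\|f\|_{L^2(Q_k)}^2\|f\|_{H^1(Q_k)}^2$ holds, with $C$ independent of $k$. It follows from the Sobolev embedding $H^1(Q)\hookrightarrow L^4(Q)$ together with interpolation, or from an extension operator plus Lemma \ref{lem;GN}. Applying it to $u_{\pm,n}$ and summing over $k$ gives
\begin{equation*}
	\|u_{+,n}\|_{L^4}^4+\|u_{-,n}\|_{L^4}^4 \le C\Bigl(\sup_{k\in\mathbb{Z}^2}\int_{Q_k}|U_n|^2\,dz\Bigr)\|U_n\|_{H^1\times H^1}^2 .
\end{equation*}
Together with the lower bound $\delta$ and the upper bound $K$, this produces $k_n\in\mathbb{Z}^2$ with $\int_{Q_{k_n}}|U_n|^2\,dz\ge \delta/(2CK^2)=:c_0>0$ for every $n$. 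I expect this step to be the only genuine point of the proof. The care needed is in summing the squared $H^1(Q_k)$ norms, which gives the global $H^1$ norm squared because the squares are disjoint, and in keeping the constant uniform in $k$ by translation invariance.

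\textbf{Step 2: translation and weak limit.} Put $z_n\coloneqq -k_n$, so that $U_n(\cdot-z_n)$ has $L^2$ mass at least $c_0$ on the fixed square $Q_0=[0,1)^2$. Translations preserve the $H^1\times H^1$ norm, so $U_n(\cdot - z_n)$ is bounded. By weak compactness we extract a subsequence converging weakly in $H^1\times H^1$ to some $(Q_+,Q_-)$.

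\textbf{Step 3: the limit is nonzero.} By the Rellich--Kondrachov theorem, the restriction map $H^1(\mathbb{R}^2)\to L^2(Q_0)$ is compact. Hence along the subsequence $U_n(\cdot-z_n)\to(Q_+,Q_-)$ strongly in $L^2(Q_0)$, and therefore $\int_{Q_0}|Q_+|^2+|Q_-|^2\,dz\ge c_0>0$. This gives $(Q_+,Q_-)\neq(0,0)$, which completes the proof.
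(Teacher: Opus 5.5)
Your proof is correct, but it does not follow the paper's route. The paper omits the proof entirely and simply invokes Lieb's lemma. That lemma's nonvanishing hypothesis is a lower bound on the measure of a superlevel set, $|\{|f_n|>\varepsilon\}|\ge\delta'$, and its proof rests on Lieb's translation-averaging estimate from his work on the lowest Dirichlet eigenvalue of an intersection of two domains.

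What you give instead is Lions' non-vanishing argument: a localized Gagliardo--Nirenberg inequality on a lattice of unit squares, summation using disjointness, then Rellich--Kondrachov on a fixed square. This buys a self-contained proof fitted exactly to the statement. It uses the $L^4$ lower bound directly, and it treats $(u_{+,n},u_{-,n})$ as one $\mathbb{C}^2$-valued function, so a single shift $z_n$ works for both components automatically.

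Deriving the lemma from Lieb's theorem as cited requires two small reductions that the paper leaves implicit:
\begin{itemize}
\item converting the $L^4$ bound into a superlevel-set bound via $H^1$-boundedness, e.g.\ $\int|u|^4\le\varepsilon^2\|u\|_{L^2}^2+|\{|u|>\varepsilon\}|^{1/2}\|u\|_{L^8}^4$;
\item securing a common translation, e.g.\ by applying the lemma along a subsequence to whichever component carries at least half of the $L^4$ mass, then extracting a weak limit of the other component with the same shifts.
\end{itemize}

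One minor point in your Step 1. The embedding $H^1(Q)\hookrightarrow L^6(Q)$ plus interpolation only yields $\|f\|_{L^4(Q)}^4\le C\|f\|_{L^2(Q)}\|f\|_{H^1(Q)}^3$, not the exponents $(2,2)$ you state. Those exponents come from your extension-operator route, or from $W^{1,1}(Q)\hookrightarrow L^2(Q)$ applied to $|f|^2$. Either version suffices, since any power of $\|f\|_{H^1(Q_k)}$ above $2$ can be reduced to the square times the global norm before summing.
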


\begin{lemma}[Brezis--Lieb lemma, \cite{BrLi84}]
\label{lem;BL}
Let $\{(u_{+,n}, u_{-,n})\} \subset H^1(\mathbb{R}^2) \times  H^1(\mathbb{R}^2)$ be bounded in $L^4(\mathbb{R}^2) \times L^4(\mathbb{R}^2)$ and $(u_{+,n}, u_{-,n}) \to (Q_{+},Q_{-})$ a.e. in $\mathbb{R}^2$. Then,  $(Q_{+},Q_{-}) \in L^4(\mathbb{R}^2) \times L^4(\mathbb{R}^2)$ and it holds that
\begin{equation*}
	N(u_{+,n},u_{-,n}) = N(Q_{+},Q_{-}) + N(u_{+,n}-Q_{+},u_{-,n}-Q_{-}) +o_n(1),
\end{equation*}
where $o_n(1)$ is a term going to $0$ as $n$ tends to $\infty$. 
\end{lemma}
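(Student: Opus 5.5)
We will apply the general Brezis--Lieb argument, not just its scalar $|u|^4$ case, to the vector-valued integrand
\begin{equation*}
	j(a,b) \coloneqq \frac14\bigl(\lambda_{+}|a|^4+\lambda_{-}|b|^4+2\lambda_0|a|^2|b|^2\bigr), \qquad (a,b)\in\mathbb{C}^2,
\end{equation*}
so that $N(u_{+},u_{-})=\int_{\mathbb{R}^2} j(u_{+},u_{-})\,dz$. This function is continuous, satisfies $j(0,0)=0$, and is homogeneous of degree $4$.

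\emph{Integrability of the limit.} Since $(u_{+,n},u_{-,n})\to(Q_{+},Q_{-})$ a.e., Fatou's lemma gives $\|Q_{\pm}\|_{L^4}\le\liminf_n\|u_{\pm,n}\|_{L^4}<\infty$. Hence $(Q_{+},Q_{-})\in L^4\times L^4$ and $N(Q_{+},Q_{-})<\infty$. Write $w_{\pm,n}\coloneqq u_{\pm,n}-Q_{\pm}$. Then $w_{\pm,n}\to0$ a.e., and $\{w_{\pm,n}\}$ is bounded in $L^4$ by some constant $K$.

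\emph{Key pointwise inequality.} We will show that for every $\varepsilon>0$ there is $C_\varepsilon$ such that for all $A,B\in\mathbb{C}^2$,
\begin{equation*}
	|j(A+B)-j(A)| \le \varepsilon |A|^4 + C_\varepsilon |B|^4 .
\end{equation*}
By homogeneity, $j(A+B)-j(A)$ is a polynomial expression in the components of $A,B$. Every term of it has total degree $4$ and degree at least $1$ in $B$. Each such term is bounded by $C|A|^{4-k}|B|^k$ with $1\le k\le4$, and Young's inequality turns this into $\varepsilon|A|^4+C_\varepsilon|B|^4$. Alternatively, one can use the mean value theorem together with $|\nabla j(A)|\lesssim|A|^3$.

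\emph{Main step.} Set
\begin{equation*}
	f_n\coloneqq \bigl|j(u_{+,n},u_{-,n})-j(w_{+,n},w_{-,n})-j(Q_{+},Q_{-})\bigr|.
\end{equation*}
Apply the pointwise inequality with $A=(w_{+,n},w_{-,n})$ and $B=(Q_{+},Q_{-})$. This gives $f_n\le\varepsilon|w_n|^4+C_\varepsilon|Q|^4+j(Q)$, where $|w_n|^4\coloneqq(|w_{+,n}|^2+|w_{-,n}|^2)^2$ and similarly for $|Q|^4$. Consequently
\begin{equation*}
	g_{n,\varepsilon}\coloneqq\bigl(f_n-\varepsilon|w_n|^4\bigr)_+ \le C_\varepsilon|Q|^4+j(Q)\in L^1 .
\end{equation*}
Since $w_n\to0$ a.e. and $j$ is continuous, $f_n\to0$ a.e., and therefore $g_{n,\varepsilon}\to0$ a.e. Dominated convergence yields $\int g_{n,\varepsilon}\to0$. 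It follows that
\begin{equation*}
	\int f_n \le \int g_{n,\varepsilon}+\varepsilon\int|w_n|^4 \le o_n(1)+C\varepsilon K^4 .
\end{equation*}
Letting $n\to\infty$ and then $\varepsilon\to0$ gives $\int f_n\to0$. This is exactly the claimed identity
\begin{equation*}
	N(u_{+,n},u_{-,n})=N(Q_{+},Q_{-})+N(w_{+,n},w_{-,n})+o_n(1).
\end{equation*}

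\emph{Expected obstacle.} The main delicate point is the uniform pointwise inequality for the coupled term $2\lambda_0|a|^2|b|^2$. Because of this term, one cannot simply add up the scalar Brezis--Lieb lemma for each component. This is why we work directly with the vector-valued $j$ and use its homogeneity. Beyond this, only the $L^4$ bound, and not the $H^1$ bound, enters the argument.
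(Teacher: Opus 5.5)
Your argument is correct and is essentially the paper's approach: the paper gives no proof and cites Brezis--Lieb, and you reproduce their general theorem. That is, you use the splitting $|j(A+B)-j(A)|\le\varepsilon|A|^4+C_\varepsilon|B|^4$ and then dominated convergence on $(f_n-\varepsilon|w_n|^4)_+$, carried out for the $\mathbb{C}^2$-valued integrand $j$ so that the coupling term $2\lambda_0|u_{+}|^2|u_{-}|^2$ is handled (the detail the citation leaves implicit).
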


We give the proof of Theorem \ref{thm;gs}

\begin{proof}[Proof of Theorem \ref{thm;gs}]
Let $0< \rho < 1/(4C_{\mathrm{GN}})$. 
Let $\{(u_{+,n},u_{-,n})\}_{n=0}^{\infty}$ be a minimizing sequence of $\mathcal{E}(\rho)$. That is, the sequence $\{(u_{+,n},u_{-,n})\}_{n=0}^{\infty}$ satisfies 
\begin{equation*}
	M(u_{+,n},u_{-,n}) = \rho
\end{equation*}
and 
\begin{equation*}
	E(u_{+,n},u_{-,n}) \to \mathcal{E}(\rho).
\end{equation*}
By Lemma \ref{lem;bdd}, the sequence $\{(u_{+,n},u_{-,n})\}_{n=0}^{\infty}$ is bounded in $H^1(\mathbb{R}^2) \times H^1(\mathbb{R}^2)$. 

\textbf{Claim 1.} We have $\inf_{n \in \mathbb{N}_0} (\|u_{+,n}\|_{L^4}^4 +\|u_{-,n}\|_{L^4}^4 )>0$. 

\begin{proof}[Proof of Claim 1]
Suppose that $\inf_{n \in \mathbb{N}_0} (\|u_{+,n}\|_{L^4}^4 +\|u_{-,n}\|_{L^4}^4 ) =0$. Then, by taking a subsequence, which is also denoted by $\{(u_{+,n},u_{-,n})\}$, we have $\|u_{+,n}\|_{L^4}^4 +\|u_{-,n}\|_{L^4}^4 \to 0$ as $n \to \infty$ and, in particular, $N(u_{+,n},u_{-,n}) \to 0$ as $n \to \infty$. Thus, it holds that
\begin{equation*}
	E(u_{+,n},u_{-,n}) = E^{\mathrm{lin}}(u_{+,n},u_{-,n}) +o_n(1) \geq \mathcal{E}^{\mathrm{lin}}(\rho) +o_n(1) 
\end{equation*}
as $n \to \infty$, where $o_n(1)$ is a term going to $0$ as $n \to \infty$. This means that
\begin{equation*}
	\mathcal{E}(\rho) \geq \mathcal{E}^{\mathrm{lin}}(\rho).
\end{equation*}
This contradicts Lemma \ref{lem;lin}. 
\end{proof}

By Claim 1 and Lieb's compactness theorem (Lemma \ref{Liebcpt}), we obtain  a subsequence, which is also denoted by $\{(u_{+,n},u_{-,n})\}$, a sequence $\{z_n\}_{n=0}^{\infty} =\{(x_n,y_n)\}_{n=0}^{\infty} \subset \mathbb{R}^2$, and $(Q_{+},Q_{-}) \in H^1(\mathbb{R}^2)\times H^1(\mathbb{R}^2) \setminus \{(0,0)\}$ such that 
\begin{equation*}
	(\widetilde{u}_{+,n}, \widetilde{u}_{-,n} )\coloneqq  (u_{+,n}(\cdot - z_n) , u_{-,n}(\cdot - z_n) ) \rightharpoonup (Q_{+},Q_{-})
	\text{ weakly in } H^1(\mathbb{R}^2)\times H^1(\mathbb{R}^2).
\end{equation*}

\textbf{Claim 2.} Let $\eta \coloneqq  M(Q_{+},Q_{-})$. Then $(Q_{+},Q_{-})$ is a minimizer of $\mathcal{E}(\eta)$. 

\begin{proof}[Proof of Claim 2]
 Let us remark that $\eta \neq 0$ since $(Q_{+},Q_{-}) \neq (0,0)$ and the weak convergence
 implies $\eta \leq \rho$. 
We set
\begin{equation*}
	w_{+,n} \coloneqq  \widetilde{u}_{+,n} - Q_{+}, \quad w_{-,n} \coloneqq  \widetilde{u}_{-,n} - Q_{-}. 
\end{equation*}
Then, $(w_{+,n},w_{-,n}) \rightharpoonup (0,0)$ weakly in $H^1(\mathbb{R}^2) \times H^1(\mathbb{R}^2)$ and thus
\begin{align*}
	&E^{\mathrm{lin}}(u_{+,n},u_{-,n})=E^{\mathrm{lin}}( \widetilde{u}_{+,n}, \widetilde{u}_{-,n})
	= E^{\mathrm{lin}}(Q_{+},Q_{-}) + E^{\mathrm{lin}}(w_{+,n},w_{-,n}) +o_n(1),  \\
	&M(u_{+,n},u_{-,n})=M(\widetilde{u}_{+,n}, \widetilde{u}_{-,n})
	= M(Q_{+},Q_{-})+ M(w_{+,n},w_{-,n}) +o_n(1).
\end{align*}
The second equality implies that $\rho - \eta_n \coloneqq  M(w_{+,n},w_{-,n}) \to \rho - \eta$. 
Then, $\{(\widetilde{u}_{+,n}, \widetilde{u}_{-,n})\}$ is bounded in $L^4(\mathbb{R}^2) \times L^4(\mathbb{R}^2)$ and $(\widetilde{u}_{+,n}, \widetilde{u}_{-,n}) \to (Q_{+},Q_{-})$ a.e. in $\mathbb{R}^2$ by taking a subsequence (see \cite[Corollary 8.7]{LiLo01}). The Brezis--Lieb lemma (Lemma \ref{lem;BL}) gives
\begin{equation*}
	N(u_{+,n},u_{-,n})=N(\widetilde{u}_{+,n}, \widetilde{u}_{-,n})
	= N(Q_{+},Q_{-}) + N(w_{+,n},w_{-,n}) +o_n(1).
\end{equation*}
Thus we have
\begin{equation}
\label{eq;ene}
	E(u_{+,n},u_{-,n})
	= E(Q_{+},Q_{-})+ E(w_{+,n},w_{-,n}) +o_n(1). 
\end{equation}

When $\eta < \rho$, we have 
\begin{align*}
	E(w_{+,n},w_{-,n}) 
	&= E^{\mathrm{lin}}(w_{+,n},w_{-,n}) - \frac{1}{4} N(w_{+,n},w_{-,n})  \\
	&= \frac{\rho-\eta_n}{\rho -\eta}E\left( \sqrt{\frac{\rho -\eta}{\rho-\eta_n}}w_{+,n}, \sqrt{\frac{\rho -\eta}{\rho-\eta_n}}w_{-,n}\right) \\
	&\quad + \frac{1}{4}\left( \frac{\rho -\eta}{\rho-\eta_n} -1\right) N(w_{+,n},w_{-,n}) \\
	&\geq  \frac{\rho-\eta_n}{\rho -\eta} \mathcal{E}(\rho-\eta) + \frac{1}{4}\left( \frac{\rho -\eta}{\rho-\eta_n} -1\right) N(w_{+,n},w_{-,n})
\end{align*}
since $M(\sqrt{\frac{\rho -\eta}{\rho-\eta_n}}w_{+,n}, \sqrt{\frac{\rho -\eta}{\rho-\eta_n}}w_{-,n})=\rho-\eta$. 
Now, since $\{(w_{+,n},w_{-,n})\}$ is bounded in $H^1(\mathbb{R}^2) \times H^1(\mathbb{R}^2)$ and especially in $L^4(\mathbb{R}^2) \times L^4(\mathbb{R}^2)$, 
it follows from $\rho-\eta_n \to \rho-\eta$ that 
\begin{equation*}
	\left( \frac{\rho -\eta}{\rho-\eta_n} -1\right) N(w_{+,n},w_{-,n}) \to 0 \text{ as } n \to \infty.
\end{equation*}
Therefore, taking the limit $n \to \infty$ in \eqref{eq;ene}, we get
\begin{equation}
\label{supadd}
	\mathcal{E}(\rho) \geq E(Q_{+},Q_{-}) + \mathcal{E}(\rho-\eta) \geq \mathcal{E}(\eta)+ \mathcal{E}(\rho-\eta).
\end{equation}
By Lemma \ref{lem;subadd}, we obtain $E(Q_{+},Q_{-})=\mathcal{E}(\eta)$. This means that $(Q_{+},Q_{-})$ is a minimizer of $\mathcal{E}(\eta)$. 

When $\eta =\rho$, \eqref{eq;ene} and taking the limit imply that
\begin{equation*}
	\mathcal{E}(\rho) \geq E(Q_{+},Q_{-})
\end{equation*}
since $E(w_{+,n},w_{-,n}) \geq \mathcal{E}(\rho-\eta_n) >-C (\rho-\eta_n) \to 0$ as $n \to \infty$. Since we also have $E(Q_{+},Q_{-}) \geq \mathcal{E}(\rho)$, $(Q_{+},Q_{-})$ is a minimizer of $\mathcal{E}(\rho)$.
\end{proof}

Suppose that $\eta <\rho$. Since $\mathcal{E}(\eta)$ has a minimizer, we have 
\begin{equation*}
	\mathcal{E}(\rho) < \mathcal{E}(\eta)+\mathcal{E}(\rho-\eta)
\end{equation*}
by Lemma \ref{lem;subadd}. 
This contradicts \eqref{supadd}. We get $\eta=\rho$ and thus the proof is completed. 
\end{proof}

\begin{remark}
\label{rmk2.2}
We give the proof of the smallness of $\dot{H}^1$-norm of the ground state (as well as the semi-vortices). Since $\mathcal{E}(\rho) < -\frac{\rho^2}{4}$ by Lemma \ref{lem;lin}, Lemma \ref{lem;bdd} implies that 
\begin{equation*}
	0>-\frac{\rho^2}{4}>\mathcal{E}(\rho)= E(Q_{+},Q_{-}) \geq \left( \frac{1}{4} -\varepsilon - C_{\mathrm{GN}}\rho \right) \|(Q_{+},Q_{-})\|_{\dot{H}^1 \times \dot{H}^1}^2 -C_{\varepsilon} \rho
\end{equation*}
and thus
\begin{equation*}
	\|(Q_{+},Q_{-})\|_{\dot{H}^1 \times \dot{H}^1}^2  \lesssim  \rho 
\end{equation*}
for small $\rho$. 
\end{remark}

\subsection{Proof of Theorems \ref{thm;ge} and \ref{thm;stab}}

\begin{proof}[Proof of Theorem \ref{thm;ge}]
This is a direct consequence of Lemma \ref{lem;bdd}. 
\end{proof}

\begin{proof}[Proof of Theorem \ref{thm;stab}]
Because of Theorem \ref{thm;ge} and $\rho < 1/(4C_{\mathrm{GN}})$, the solution starting from the initial data near the ground state is global. 
We prove the stability by contradiction. Suppose that there exist $\{(\psi_{+,0,n},\psi_{-,0,n})\}_{n=0}^{\infty} \subset H^1(\mathbb{R}^2) \times  H^1(\mathbb{R}^2)$, $\{t_n\}_{n=0}^{\infty} \subset (0,\infty)$, and $\varepsilon_0>0$ such that 
\begin{equation}
\label{eq;inf_0}
	\inf_{(Q_{+},Q_{-}) \in \mathscr{G}_{\rho}} \|(\psi_{+,0,n},\psi_{-,0,n}) - (Q_{+}, Q_{-})\|_{H^1 \times H^1}< \frac{1}{n}
\end{equation}
and the corresponding solution $(\psi_{+,n}(t_n),\psi_{-,n}(t_n))$ to \eqref{NLSSO} satisfies
\begin{equation}
\label{eq;inf_t}
	\inf_{(Q_{+},Q_{-}) \in \mathscr{G}_{\rho}} \|(\psi_{+,n}(t_n),\psi_{-,n}(t_n)) - (Q_{+}, Q_{-})\|_{H^1 \times H^1} \geq \varepsilon_0 
\end{equation}
for $n \in \mathbb{N}_0$.

\textbf{Claim.} There exist $\{z_n\}_{n=0}^{\infty} \subset \mathbb{R}^2$ and $ (Q_{+}, Q_{-}) \in \mathscr{G}_{\rho}$ such that 
\begin{equation*}
	\lim_{n \to \infty} \|(\psi_{+,0,n}(\cdot + z_n),\psi_{-,0,n}(\cdot + z_n)) - (Q_{+}, Q_{-})\|_{H^1 \times H^1} =0
\end{equation*}
by taking a subsequence. 

\begin{proof}[Proof of Claim]
By \eqref{eq;inf_0}, we have a sequence $\{(Q_{+,n},Q_{-,n})\}_{n=0}^{\infty} \subset \mathscr{G}_{\rho}$ such that 
\begin{equation*}
	\|(\psi_{+,0,n},\psi_{-,0,n}) - (Q_{+,n}, Q_{-,n})\|_{H^1 \times H^1}< \frac{1}{n}. 
\end{equation*}
Since $\{(Q_{+,n},Q_{-,n})\}_{n=0}^{\infty} \subset \mathscr{G}_{\rho}$, $\{(Q_{+,n},Q_{-,n})\}_{n=0}^{\infty}$ is a minimizing sequence of $\mathcal{E}(\rho)$ and thus the proof of Theorem \ref{thm;gs} implies that 
\begin{equation*}
	(Q_{+,n}(\cdot -z_n),Q_{-,n}(\cdot -z_n)) \to (Q_{+},Q_{-}) \text{ strongly in } H^1(\mathbb{R}^2) \times H^1(\mathbb{R}^2) 
\end{equation*}
for some $(Q_{+},Q_{-}) \in \mathscr{G}_{\rho}$ and some $\{z_n\}_{n=0}^{\infty} \subset \mathbb{R}^2$ by taking a subsequence. By the triangle inequality, we obtain the claim. 
\end{proof}

By the claim, we obtain 
\begin{equation*}
	M(\psi_{+,0,n},\psi_{-,0,n}) \to M(Q_{+},Q_{-})=\rho 
	\text{ and } E(\psi_{+,0,n},\psi_{-,0,n}) \to E(Q_{+},Q_{-})=\mathcal{E}(\rho)
\end{equation*}
as $n \to \infty$. By the mass and energy conservation, we have
\begin{equation*}
	M(\psi_{+,n}(t_n),\psi_{-,n}(t_n)) \to \rho \text{ and } E(\psi_{+,n}(t_n),\psi_{-,n}(t_n)) \to \mathcal{E}(\rho)
\end{equation*}
as $n \to \infty$. By setting 
\begin{equation*}
	(u_{+,n}, u_{-,n}) \coloneqq  (\theta_n \psi_{+,n}(t_n),\theta_n\psi_{-,n}(t_n)),\quad 
	\theta_n \coloneqq  \sqrt{\frac{\rho}{M(\psi_{+,n}(t_n),\psi_{-,n}(t_n))}},
\end{equation*}
since $\theta_n \to 1$, we have 
\begin{equation*}
	M(u_{+,n}, u_{-,n}) = \rho \text{ and } E(u_{+,n}, u_{-,n}) \to \mathcal{E}(\rho).
\end{equation*}
That is, $\{(u_{+,n}, u_{-,n})\}_{n=0}^{\infty}$ is a minimizing sequence of $\mathcal{E}(\rho)$. By the proof of Theorem \ref{thm;gs} again, there exists $\{\widetilde{z}_n\}_{n=0}^{\infty} \subset \mathbb{R}^2$ and $(\widetilde{Q}_{+},\widetilde{Q}_{-}) \in \mathscr{G}_{\rho}$ such that
\begin{equation*}
	\lim_{n \to \infty} \|(u_{+,n}(\cdot -\widetilde{z}_n), u_{-,n}(\cdot -\widetilde{z}_n)) - (\widetilde{Q}_{+},\widetilde{Q}_{-})\|_{H^1\times H^1} =0
\end{equation*}
by taking a subsequence. By the definition of $(u_{+,n}, u_{-,n})$, it holds that
\begin{align*}
	&\|(u_{+,n}(\cdot -\widetilde{z}_n), u_{-,n}(\cdot -\widetilde{z}_n))  -(\psi_{+,n}(t_n,\cdot-\widetilde{z}_n),\psi_{-,n}(t_n,\cdot-\widetilde{z}_n) )\|_{H^1\times H^1} \\
	&=\left| 1 - \theta_n \right| \|(\psi_{+,n}(t_n),\psi_{-,n}(t_n))\|_{H^1\times H^1} \to 0.
\end{align*} 
Thus, we obtain 
\begin{equation*}
	\lim_{n \to \infty} \|(\psi_{+,n}(t_n), \psi_{-,n}(t_n)) - (\widetilde{Q}_{+}(\cdot +\widetilde{z}_n),\widetilde{Q}_{-}(\cdot +\widetilde{z}_n))\|_{H^1\times H^1} =0.
\end{equation*}
This contradicts \eqref{eq;inf_t} since $(\widetilde{Q}_{+}(\cdot +z),\widetilde{Q}_{-}(\cdot +z)) \in \mathscr{G}_{\rho}$ for any $z\in \mathbb{R}^2$ if $(\widetilde{Q}_{+},\widetilde{Q}_{-}) \in \mathscr{G}_{\rho}$. 
\end{proof}


\subsection{Proof of Theorem \ref{thm;mm}}

In this section, we always assume that $\lambda_{+}=\lambda_{-}=\lambda_{0}$. 
It is enough to show that $(f_{+},f_{-})=(f_{+,m,\eta},f_{-,m+1,\eta})$ satisfies
\begin{align*}
	&\begin{cases}
	-\frac{1}{2} \Delta f_{+} +\omega f_{+} + \nu D^{-}f_{-} -\lambda_{+} |f_{+}|^2 f_{+} - \lambda_{0} |f_{-}|^2 f_{+} =0 \\
	-\frac{1}{2} \Delta f_{-} +\omega f_{-} + \nu D^{+}f_{+} -\lambda_{-} |f_{-}|^2 f_{-} - \lambda_{0} |f_{+}|^2 f_{-} =0
	\end{cases}
	\\
	&\Leftrightarrow 
	\begin{cases}
	-\frac{1}{2} \Delta f_{+} +\omega f_{+} + \nu D^{-}f_{-} -\lambda_{0} (|f_{+}|^2 +|f_{-}|^2) f_{+} =0 \\
	-\frac{1}{2} \Delta f_{-} +\omega f_{-} + \nu D^{+}f_{+} -\lambda_{0} (|f_{-}|^2 + |f_{+}|^2) f_{-} =0
	\end{cases}
\end{align*}
Let $\varphi_{+}= \varphi_{+,m}$ and $\varphi_{-}=\varphi_{-,m}$ for short. 
Now we have
\begin{align}
	&|f_{+}|^2 + |f_{-}|^2 \\
	&= |\cos \eta e^{im\theta}\varphi_{+} -\sin \eta e^{-i(m+1)\theta}\overline{\varphi_{-}} |^2
	+ | \cos \eta e^{i(m+1)\theta}\varphi_{-} + \sin \eta e^{-im\theta}\overline{\varphi_{+}} |^2  \\
	&= |e^{im\theta}\varphi_{+}|^2 + |e^{i(m+1)\theta}\varphi_{-}|^2  
	\label{eq1209}
\end{align}
and thus 
\begin{align*}
	&-\frac{1}{2} \Delta f_{+} +\omega f_{+} + \nu D^{-}f_{-} - \lambda_{0} (|f_{+}|^2 +|f_{-}|^2) f_{+} \\
	&=-\frac{1}{2} \Delta f_{+} +\omega f_{+} + \nu D^{-}f_{-} -\lambda_{0} (|\varphi_{+,m}|^2 + |\varphi_{-,m+1}|^2) f_{+} \\
	&=\cos \eta \left( (-\frac{1}{2} \Delta +\omega )(e^{im\theta}\varphi_{+}) +\nu D^{-} (e^{i(m+1)\theta}\varphi_{-})-\lambda_{0} (|\varphi_{+}|^2 + |\varphi_{-}|^2) (e^{im\theta}\varphi_{+})  \right) \\
	&- \sin \eta \overline{\left( (-\frac{1}{2} \Delta +\omega )( e^{i(m+1)\theta}\varphi_{-}) -\nu D^{+} (e^{im\theta}\varphi_{+})-\lambda_{0} (|\varphi_{+}|^2 + |\varphi_{-}|^2) (e^{i(m+1)\theta}\varphi_{-})  \right) } \\
	&=0, 
\end{align*}
where the last follows from that $(e^{im\theta}\varphi_{+,m}, e^{i(m+1)\theta}\varphi_{-,m+1})$ is the solution of \eqref{ellip}. Similarly, we can show that 
\begin{equation*}
	-\frac{1}{2} \Delta f_{-} +\omega f_{-} + \nu D^{+}f_{+} -\lambda_{0} (|f_{-}|^2 + |f_{+}|^2) f_{-} =0.
\end{equation*}
These means that the first assertion of Theorem \ref{thm;mm} holds. 

Next we will show the equalities of the energy and the mass. 
By \eqref{eq1209}, we have
\begin{equation*}
	M(f_{+},f_{-}) = M( e^{im\theta}\varphi_{+}, e^{i(m+1)\theta}\varphi_{-}).
\end{equation*}
Moreover, \eqref{eq1209} also gives us 
\begin{align*}
	N(f_{+},f_{-})
	&=\lambda_0 \int_{\mathbb{R}^2}(|f_{+}|^2 + |f_{-}|^2)^2 dz
	\\
	&= \lambda_0 \int_{\mathbb{R}^2}(|\varphi_{+}|^2 + |\varphi_{-}|^2)^2 dz \\
	&=N(e^{im\theta}\varphi_{+}, e^{i(m+1)\theta}\varphi_{-}). 
\end{align*}
Now direct calculation shows that
\begin{align*}
	&|D^{+}f_{+}|^2 +|D^{-}f_{-}|^2 \\
	&=\left|\cos \eta D^{+}(e^{im\theta}\varphi_{+}) - \sin \eta \overline{D^{-} (e^{i(m+1)\theta}\varphi_{-})} \right|^2\\
	&\quad + \left|\cos \eta D^{-}(e^{i(m+1)\theta}\varphi_{-}) +\sin \eta\overline{D^{+} (e^{im\theta}\varphi_{+} )} \right|^2 \\
	&= |D^{+}(e^{im\theta}\varphi_{+}) |^2 + |D^{-} (e^{i(m+1)\theta}\varphi_{-})|^2
\end{align*}
and thus 
\begin{align*}
	K(f_{+},f_{-}) &= \int_{\mathbb{R}^2} |D^{+}f_{+}|^2 +|D^{-}f_{-}|^2 dz  \\
	&=  \int_{\mathbb{R}^2} |D^{+}( e^{im\theta}\varphi_{+})|^2 +|D^{-} (e^{i(m+1)\theta}\varphi_{-})|^2 dz \\
	&=K(e^{im\theta}\varphi_{+}, e^{i(m+1)\theta}\varphi_{-}). 
\end{align*}
By the direct calculation, we also have
\begin{equation*}
	V_{\mathrm{SO}} (f_{+},f_{-}) = V_{\mathrm{SO}} (e^{im\theta}\varphi_{+}, e^{i(m+1)\theta}\varphi_{-}).
\end{equation*}
Combining these, we obtain 
\begin{equation*}
	E(f_{+},f_{-}) = E(e^{im\theta}\varphi_{+},e^{i(m+1)\theta}\varphi_{-}). 
\end{equation*}
The second statement of Theorem \ref{thm;mm} has been proven.

%


\section{Detail proofs for Claim in Lemma \ref{lem;lin}}
\label{sec3}

We use the notations $v_{\pm}=v_{\pm}^{R}$ and $u_{\pm}$ as in the  proof of Lemma \ref{lem;lin} in this section. 

\begin{lemma}
\label{lemA.1}
Let $a=a(\rho,R)<0$ be a constant as in the proof of Lemma \ref{lem;lin}. Then there exist constants $c,C>0$, independent of $R$ and $\rho$, such that 
\begin{equation*}
	c\sqrt{\frac{\rho}{R}} \leq |a| \leq C\sqrt{\frac{\rho}{R}}
\end{equation*}
\end{lemma}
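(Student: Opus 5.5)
Since $M(u_{+},u_{-}) = \|v_{+}\|_{L_r^2}^2 + \|v_{-}\|_{L_r^2}^2$ and both $v_{\pm}$ are linear in $a$, we have $\rho = a^2 (A_R + B_R)$, where
\begin{equation*}
	A_R \coloneqq \int_0^\infty \chi(r/R)^2 J_{m+1}(\nu r)^2 r\,dr, \quad
	B_R \coloneqq \nu^{-2} \int_0^\infty \left| \tfrac{m+1}{r}\chi(r/R) J_{m+1}(\nu r) + \bigl(\chi(r/R)J_{m+1}(\nu r)\bigr)' \right|^2 r\,dr .
\end{equation*}
It therefore suffices to show that $cR \le A_R + B_R \le CR$ for all large $R$, with constants independent of $R$ (and trivially of $\rho$). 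Then $|a| = \sqrt{\rho/(A_R+B_R)}$ gives the claim. For bounded $R$ the statement is harmless, since $A_R+B_R$ is continuous and positive in $R$; only $R\gtrsim 1$ matters for the application.

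For $A_R$, split the integral at a fixed large $r_0/\nu$. The part near the origin is $O(1)$, because $J_{m+1}$ is bounded, and it is finite at $0$ since $J_{m+1}(r)\sim r^{|m+1|}$ up to sign. On $[r_0/\nu, 2R]$ use \eqref{eq;Bes}: $J_{m+1}(\nu r)^2 r = \frac{2}{\pi\nu}\cos^2(\nu r - \phi) + O(r^{-1})$. Integrating against $\chi(r/R)^2$, which equals $1$ on $[0,R]$ and is bounded by $1$ on $[R,2R]$, gives
\begin{equation*}
	A_R = \frac{2}{\pi\nu}\int_{r_0/\nu}^{\infty} \chi(r/R)^2\cos^2(\nu r-\phi)\,dr + O(\log R).
\end{equation*}
Using $\cos^2 = \tfrac12(1+\cos 2(\cdot))$, the oscillatory part is $O(1)$ after one integration by parts, since $\partial_r \chi(r/R)^2 = O(R^{-1})$ on an interval of length $R$. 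The main part lies between $\frac{1}{2}(R - r_0/\nu)$ and $R$. Hence $A_R \approx R$.

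For $B_R$, the lower bound $B_R\ge 0$ suffices, so only the upper bound is needed. Write $(\chi J_{m+1}(\nu\cdot))' = \chi(r/R)\nu J_{m+1}'(\nu r) + R^{-1}\chi'(r/R)J_{m+1}(\nu r)$. By the recurrence $J_{m+1}'(s) + \frac{m+1}{s}J_{m+1}(s) = J_m(s)$, the principal part is $\nu\chi(r/R)J_m(\nu r)$. This is consistent with the remark $v_{+} = J_m + O(R^{-1})$ up to the factor $a$. Its contribution is bounded by the same computation as $A_R$, namely $O(R)$. The error term $R^{-1}\chi' J_{m+1}$ is supported in $[R,2R]$ and is of size $R^{-1}\cdot r^{-1/2}$ there, so its squared $L_r^2$ norm is $O(R^{-2}\cdot R^{-1}\cdot R\cdot R)=O(R^{-1})$. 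Combining these with $(x+y)^2\le 2x^2+2y^2$ gives $B_R\lesssim R$.

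The main point requiring care is the lower bound on $A_R$. The asymptotic formula \eqref{eq;Bes} holds only for large arguments, and the oscillation of $\cos^2$ must be averaged out. I would handle this by restricting to $[r_0/\nu, R]$, where $\chi\equiv1$, and computing $\int\cos^2 = \frac12(R-r_0/\nu) + O(1)$ explicitly. The $O(r^{-1})$ remainders then contribute only $O(\log R)$, which is absorbed for $R$ large.
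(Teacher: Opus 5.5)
Your proof is correct and follows essentially the same route as the paper: there too one rewrites $v_{+}=-a\chi(r/R)J_m(\nu r)-\frac{a}{\nu R}\chi'(r/R)J_{m+1}(\nu r)$ via the recurrences, bounds $\|v_{\pm}\|_{L_r^2}^2\lesssim a^2R$ using boundedness and the $r^{-1/2}$ decay of $J_l$, and obtains $\|v_{-}\|_{L_r^2}^2\gtrsim a^2R$ from the large-argument asymptotics on a range where $\chi\equiv1$ (the paper also proves a lower bound for $\|v_{+}\|_{L_r^2}^2$, which is unnecessary since, as you note, $B_R\ge0$ suffices).

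There are two harmless slips. First, with the remainder $O(r^{-1})$ for $J_l$ as stated in the paper, the cross term makes the error in $J_{m+1}(\nu r)^2r$ only $O(r^{-1/2})$, so it integrates to $O(R^{1/2})$ rather than $O(\log R)$; this is still $o(R)$, and the paper effectively avoids the cross term via $|x+y|^2\ge\frac12|x|^2-|y|^2$. Second, your continuity remark does not give uniform constants as $R\to0$ (e.g.\ for $m=0$ one has $A_R+B_R\sim R^2$ there), so, exactly as in the paper, the lemma must be read for $R$ sufficiently large, which is all that is used.
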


\begin{proof}
(Lower bound) By \eqref{eq;Bes} and the boundedness of the Bessel function, we obtain 
\begin{align*}
	\|v_{-}\|_{L_r^2}^2 
	&= a^2 \int_{0}^{\infty} \chi\left(\frac{r}{R} \right) |J_{m+1}(\nu r)|^2 r dr \\
	&\leq a^2 C\left( \int_{0}^{\sqrt{R}} rdr + \int_{\sqrt{R}}^{2R} \left( \frac{1}{r^{\frac{1}{2}}} \right)^2 r dr  \right) \\
	&\leq a^2 CR.
\end{align*}
Substituting $v_{-}$ into the definition of $v_{+}$, we get
\begin{equation*}
	v_{+} (r) = - a \chi\left(\frac{r}{R} \right) J_{m}(\nu r) - \frac{a}{\nu} \frac{1}{R} \chi'\left(\frac{r}{R} \right) J_{m+1}(\nu r),
\end{equation*}
where we have used the formulas for the Bessel functions such that
\begin{align}
	\frac{2l}{x}J_{l}(x) &= J_{l-1}(x) + J_{l+1}(x), \\
	\label{eq;difBes}
	2J_{l}'(x) &= J_{l-1}(x) - J_{l+1}(x).
\end{align}
See \cite[Equations (1) and (2) in Section 2.12 (p.17)]{Wat95}. 
Thus, a similar calculation as above gives
\begin{align*}
	\|v_{+}\|_{L_r^2}^2 
	&\leq \|a \chi\left(\frac{r}{R} \right) J_{m}(\nu r) \|_{L_r^2}^2  + 
	\|\frac{a}{\nu} \frac{1}{R} \chi'\left(\frac{r}{R} \right) J_{m+1}(\nu r)\|_{L_r^2}^2 \\
	&\leq a^2CR + \frac{a^2C}{R}\\
	&\leq a^2CR
\end{align*}
when $R$ is sufficiently large. 
Therefore, it holds that
\begin{equation*}
	\rho = M(u_{+},u_{-}) = \|v_{+}\|_{L_r^2}^2  + \|v_{-}\|_{L_r^2}^2  \leq a^2CR
\end{equation*}
and thus it follows the lower bound. 

(Upper bound) For sufficiently large $R$, we have
\begin{align*}
	\|v_{-}\|_{L_r^2}^2 
	&= a^2 \int_{0}^{\infty} \chi\left(\frac{r}{R} \right) |J_{m+1}(\nu r)|^2 r dr \\
	&\geq a^2 C \int_{\sqrt{R}}^{R} \left( \frac{|\cos (r- \frac{m+1}{2}\pi - \frac{\pi}{4})|}{r^{\frac{1}{2}}} \right)^2 r dr - a^2c  \int_{\sqrt{R}}^{R} \left( \frac{1}{r} \right)^2 r dr \\
	&\geq a^2 C \int_{\sqrt{R}}^{R} |\cos (r- \frac{m+1}{2}\pi - \frac{\pi}{4})|^2 dr - a^2c  \int_{\sqrt{R}}^{R} \frac{1}{r} dr  \\
	&\geq a^2 ( CR -c\log R) \\
	&\geq a^2 C R.
\end{align*}
Now we have
\begin{align*}
	\|v_{+}\|_{L_r^2}^2 
	&= a^2 \|\chi\left(\frac{r}{R} \right) J_{m}(\nu r)\|_{L_r^2}^2  \\
	&\quad +2 \frac{a^2}{\nu} \frac{1}{R} (\chi\left(\frac{r}{R} \right) J_{m}(\nu r), \chi'\left(\frac{r}{R} \right) J_{m+1}(\nu r))_{L_r^2} \\
	&\quad + \frac{a^2}{\nu^2R^2} \|\chi'\left(\frac{r}{R} \right) J_{m+1}(\nu r)\|_{L_r^2}^2 \\
	&\geq a^2 \|\chi\left(\frac{r}{R} \right) J_{m}(\nu r)\|_{L_r^2}^2  \\
	&\quad +2 \frac{a^2}{\nu} \frac{1}{R} (\chi\left(\frac{r}{R} \right) J_{m}(\nu r), \chi'\left(\frac{r}{R} \right) J_{m+1}(\nu r))_{L_r^2}. 
\end{align*}
As seen above, we have
\begin{align*}
	&(\chi\left(\frac{r}{R} \right) J_{m}(\nu r), \chi'\left(\frac{r}{R} \right) J_{m+1}(\nu r))_{L_r^2} \\
	&\leq \|\chi\left(\frac{r}{R} \right) J_{m}(\nu r)\|_{L_r^2} \|\chi'\left(\frac{r}{R} \right) J_{m+1}(\nu r)\|_{L_r^2} \\
	&\leq C R.
\end{align*}
Similarly to the calculation for $\varphi_{-}$, we also have
\begin{equation*}
	\|\chi\left(\frac{r}{R} \right) J_{m}(\nu r)\|_{L_r^2}^2 \geq CR.
\end{equation*}
Therefore, it holds that
\begin{equation*}
	\|v_{+}\|_{L_r^2}^2  \geq a^2 CR - a^2 \frac{C R}{R} \geq a^2 CR. 
\end{equation*}
Combining the estimates for $v_{-}$ and $v_{+}$, we obtain 
\begin{equation*}
	\rho = M(u_{+},u_{-}) \geq a^2CR. 
\end{equation*}
This gives the upper estimate of $|a|$. 
\end{proof}

\begin{lemma}
\label{lemA.2}
We have
\begin{equation*}
	E^{\mathrm{lin}}(u_{+},u_{-}) + \frac{\nu^2}{4} \rho
	\leq \frac{C}{R^2}. 
\end{equation*}
\end{lemma}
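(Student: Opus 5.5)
We reduce to the radial representation and use the identity \eqref{eq;linE}. Since $(u_{+},u_{-})=(e^{im\theta}v_{+},e^{i(m+1)\theta}v_{-})$, we have $D^{-}u_{-}=e^{im\theta}r^{-(m+1)}(r^{m+1}v_{-})'$ and $D^{+}u_{+}=e^{i(m+1)\theta}r^{m}(r^{-m}v_{+})'$. Therefore
\begin{equation*}
	E^{\mathrm{lin}}(u_{+},u_{-}) + \frac{\nu^2}{4}\rho = \frac{1}{4}\bigl\| v_{-}' + \tfrac{m+1}{r}v_{-} + \nu v_{+}\bigr\|_{L_r^2}^2 + \frac{1}{4}\bigl\| v_{+}' - \tfrac{m}{r}v_{+} - \nu v_{-}\bigr\|_{L_r^2}^2
\end{equation*}
(up to the harmless factor $2\pi$). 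The first term vanishes identically by the definition of $v_{+}$. So everything reduces to estimating $\|v_{+}' - \frac{m}{r}v_{+} - \nu v_{-}\|_{L_r^2}^2$.

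For this I use the explicit formula from the proof of Lemma \ref{lemA.1}:
\begin{equation*}
	v_{+}(r) = -a\chi(r/R)J_m(\nu r) - \frac{a}{\nu R}\chi'(r/R)J_{m+1}(\nu r).
\end{equation*}
For the first piece I apply the Bessel identity $J_m'(x) - \frac{m}{x}J_m(x) = -J_{m+1}(x)$, which follows from the two recurrences \eqref{eq;difBes}. This gives
\begin{equation*}
	\Bigl(\partial_r - \frac{m}{r}\Bigr)\bigl(-a\chi(r/R)J_m(\nu r)\bigr) = a\nu\chi(r/R)J_{m+1}(\nu r) - \frac{a}{R}\chi'(r/R)J_m(\nu r).
\end{equation*}
The term $a\nu\chi J_{m+1}$ exactly cancels $\nu v_{-}$. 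What remains is
\begin{equation*}
	-\frac{a}{R}\chi'(r/R)J_m(\nu r) - \frac{a}{\nu R}\Bigl(\partial_r - \frac{m}{r}\Bigr)\bigl(\chi'(r/R)J_{m+1}(\nu r)\bigr),
\end{equation*}
and every term in it carries a factor $R^{-1}$ and is supported in the annulus $R\le r\le 2R$.

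It then suffices to bound the $L_r^2$ norms of these annular terms.
\begin{itemize}
\item On $[R,2R]$ the Bessel bound \eqref{eq;Bes} gives $|J_l(\nu r)|\lesssim r^{-1/2}$. The same bound holds for $J_{m+1}'$, again via \eqref{eq;difBes}.
\item $\chi'$ and $\chi''$ are bounded, the derivative of $\chi'(r/R)$ brings an extra $R^{-1}$, and $m/r\lesssim R^{-1}$ there.
\item Hence each term is pointwise $\lesssim |a|R^{-1}r^{-1/2}$ on $[R,2R]$. Its squared norm is then $\lesssim a^2R^{-2}\int_R^{2R} r^{-1}\,r\,dr \lesssim a^2R^{-1}$.
\end{itemize}
Lemma \ref{lemA.1} gives $a^2\lesssim \rho/R$, so the total is $\lesssim \rho R^{-2}$, which is $\le C/R^2$ for fixed $\rho$ (with $C$ depending on $\rho,\nu,m$).

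The main point requiring care is the exact cancellation: the sign conventions for $a<0$ and the minus sign in the ansatz must match the Bessel recurrences so that $\chi J_{m+1}$ cancels against $\nu v_{-}$. The rest is the routine annulus estimate, and I would check the sign of the recurrence first.
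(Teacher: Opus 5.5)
Your proposal is correct and follows essentially the paper's route. You factorize via \eqref{eq;linE}, observe that the first term vanishes by the definition of $v_{+}$, cancel the bulk term with Bessel identities, and bound the leftover cutoff terms on the annulus $R\le r\le 2R$ using $a^2\lesssim \rho/R$. The only difference is cosmetic: the paper rewrites the second term as the Bessel operator applied to $v_{-}$ and invokes Bessel's ODE, while you apply the first-order ladder identity to the explicit formula for $v_{+}$. Both give the same remainder, and your annulus estimate even avoids the paper's unnecessary $\log R$ factor.
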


\begin{proof}
Now we have
\begin{align*}
	&E^{\mathrm{lin}}(u_{+},u_{-}) + \frac{\nu^2}{4} \rho \\
	&= \frac{1}{4}\left(  \|r^{-(m+1)} (r^{m+1}v_{-})' + \nu v_{+} \|_{L_r^2}^2  + \| r^{m} (r^{-m} v_{+})' - \nu v_{-} \|_{L_r^2}^2\right).
\end{align*}
By the definition of $v_{+}$, we have
\begin{equation*}
	r^{-(m+1)} (r^{m+1}v_{-})' + \nu v_{+} =0
\end{equation*}
and thus it is enough to estimate $\| r^{m} (r^{-m} v_{+})' - \nu v_{-} \|_{L_r^2}^2$. By the definition of $\varphi_{+}$, we have
\begin{equation*}
	 r^{m} (r^{-m} v_{+})' - \nu v_{-} 
	 = -\frac{1}{\nu r^2} [ r^2 v_{-}'' +rv_{-}' +\{\nu^2r^2 -(m+1)^2\}v_{-}].
\end{equation*}
Substituting $v_{-}=a \chi(r/R)J_{m+1}(\nu r)$ and using the formula $x^2J_{l}''(x)+xJ_{l}'(x)+(x^2-l^2)J_{l}(x)=0$ (see \cite[Equation (1) in Section 2.13 (p.19)]{Wat95}), we obtain 
\begin{align*}
	&r^{m} (r^{-m} v_{+})' - \nu v_{-}  \\
	&= -\frac{a}{\nu} \left\{ \left( \frac{1}{R^2} \chi''\left(\frac{r}{R} \right) + \frac{1}{Rr}\chi'\left(\frac{r}{R} \right)\right) J_{m+1}(\nu r)
	+ \frac{\nu}{R} \chi'\left(\frac{r}{R} \right) 2J_{m+1}'(\nu r) \right\}.
\end{align*}
Since we have $2J_{m+1}'(\nu r) = J_{m}(\nu r) -J_{m+2}(\nu r)$ by  \eqref{eq;difBes}, we obtain 
\begin{align*}
	&\| r^{m} (r^{-m} v_{+})' - \nu v_{-} \|_{L_r^2}^2 \\
	&\leq C a^2 \| \left(\frac{1}{R^2} \chi''\left(\frac{r}{R} \right) + \frac{1}{Rr}\chi'\left(\frac{r}{R} \right)\right) J_{m+1}(\nu r) \|_{L_r^2}^2 \\
	& \quad + \frac{a^2}{R^2} \| \chi'\left(\frac{r}{R} \right) (J_{m}(\nu r) -J_{m+2}(\nu r)) \|_{L_r^2}^2 \\
	&\leq  \frac{Ca^2}{R^4}  \|\chi''\left(\frac{r}{R} \right) J_{m+1}(\nu r) \|_{L_r^2}^2 
	 + \frac{Ca^2}{R^2} \|\frac{1}{r}\chi'\left(\frac{r}{R} \right)J_{m+1}(\nu r) \|_{L_r^2}^2 \\
	& \quad + \frac{a^2}{R^2} \| \chi'\left(\frac{r}{R} \right) (|J_{m}(\nu r)| +|J_{m+2}(\nu r)|) \|_{L_r^2}^2 \\
	&\leq  \frac{Ca^2}{R^4} R + \frac{Ca^2}{R^2} \log R + \frac{Ca^2}{R^2} R \\
	&\leq \frac{Ca^2}{R} \leq \frac{C}{R^2}
\end{align*}
in the similar way to the lower bound of $a$, where we used the upper bound of $a$ in the last inequality. 
\end{proof}

\begin{lemma}
\label{lemA.3}
It holds that
\begin{equation*}
	N(u_{+},u_{-}) \geq C \frac{\log R}{R^2}. 
\end{equation*}
\end{lemma}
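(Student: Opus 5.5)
We need $N(u_+,u_-) \gtrsim R^{-2}\log R$. Since all coefficients $\lambda_\pm,\lambda_0$ are positive, every term of $N$ is nonnegative, so it suffices to bound a single term from below:
\begin{equation*}
	N(u_{+},u_{-}) \geq \frac{\lambda_{-}}{4}\int_{\mathbb{R}^2}|u_{-}|^4 dz = \frac{\pi\lambda_{-}}{2}\, a^4 \int_0^\infty \chi\left(\frac{r}{R}\right)^4 |J_{m+1}(\nu r)|^4 r\,dr .
\end{equation*}
Choosing $u_-$ avoids the correction term $\frac{a}{\nu R}\chi'(r/R)J_{m+1}(\nu r)$ present in $v_+$. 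One could equally use $v_+$, since that correction is $O(|a|R^{-1}r^{-1/2})$ and hence negligible in $L^4_r$.

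By Lemma \ref{lemA.1}, $a^4 \geq c\rho^2 R^{-2}$. It therefore remains to show that
\begin{equation*}
	\int_{\sqrt{R}}^{R} |J_{m+1}(\nu r)|^4 r\,dr \gtrsim \log R,
\end{equation*}
using $\chi(r/R)=1$ on $[0,R]$. Here I apply the asymptotics \eqref{eq;Bes}: for $r\ge \sqrt R$ large,
\begin{equation*}
	J_{m+1}(\nu r)=\Big(\tfrac{2}{\pi\nu r}\Big)^{1/2}\cos\!\big(\nu r-\tfrac{m+1}{2}\pi-\tfrac{\pi}{4}\big)+O(r^{-1}).
\end{equation*}
Write $J=A+B$ with $A$ the main term. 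By the elementary inequality $|A+B|^4\ge \frac12|A|^4 - C|B|^4$, which follows from $|A|\le |A+B|+|B|$ and convexity, we get
\begin{equation*}
	\int_{\sqrt R}^{R}|J|^4 r\,dr \ge \frac{c}{\nu^2}\int_{\sqrt R}^{R} \frac{\cos^4(\nu r-\alpha)}{r}\,dr - C\int_{\sqrt R}^R r^{-3}\,dr .
\end{equation*}
The error term is $O(R^{-1})$.

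The main step is the lower bound on the oscillatory integral. I split $[\sqrt R,R]$ into intervals of length $2\pi/\nu$. On each such interval $1/r$ varies by a factor $1+O(R^{-1/2})$, while $\cos^4$ has mean $3/8$. Comparing each piece with $\frac{3}{8}\int dr/r$ over the same interval gives
\begin{equation*}
	\int_{\sqrt R}^{R}\frac{\cos^4(\nu r-\alpha)}{r}dr \ge \frac{3}{8}\cdot\frac{1}{2}\log R - C \ge c\log R
\end{equation*}
for $R$ large. An alternative is to write $\cos^4=\frac38+\frac12\cos 2(\cdot)+\frac18\cos4(\cdot)$ and integrate the oscillating parts by parts, which yields $O(R^{-1/2})$ contributions.

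Combining the pieces gives $N(u_+,u_-)\ge c\rho^2 R^{-2}(\log R - C)\ge C R^{-2}\log R$ for $R$ sufficiently large, with constants depending on $\rho,\nu,m$ but not on $R$. I expect the only delicate point to be the averaging of $\cos^4$ against the slowly varying weight $1/r$, together with controlling the $O(r^{-1})$ remainder uniformly. Both are handled by restricting to $r\ge\sqrt R$, exactly as in Lemma \ref{lemA.1}.
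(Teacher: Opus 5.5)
Your proposal is correct and follows the paper's route: you keep only the $\lambda_-|v_-|^4$ term, use $|a|\ge c\sqrt{\rho/R}$ from Lemma \ref{lemA.1}, restrict to $[\sqrt R,R]$ where $\chi(r/R)=1$, and apply \eqref{eq;Bes} together with the inequality $|A+B|^4\ge c|A|^4-C|B|^4$. Beyond that, your period-averaging argument for $\int_{\sqrt R}^R \cos^4(\nu r-\alpha)\,r^{-1}\,dr\gtrsim\log R$ justifies a step the paper only asserts, and your remainder bound $O(R^{-1})$ for $\int_{\sqrt R}^R r^{-3}\,dr$ is the accurate one (the paper writes $R^{-2}$, which is harmless for the conclusion).
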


\begin{proof}
Since $\lambda_{0},\lambda_{+}>0$ and $a \geq cR^{1/2}$, we have
\begin{align*}
	N(u_{+},u_{-}) 
	&\geq \frac{\lambda_{-}}{4} \int_{0}^{\infty} |v_{-}|^4 rdr \\
	&\geq C a^4  \int_{\sqrt{R}}^{R} |J_{m+1}(\nu r)|^4 rdr \\
	&\geq C a^4  \int_{\sqrt{R}}^{R} \left| \frac{\cos(r - \frac{m+1}{2}\pi - \frac{\pi}{4})}{r^{\frac{1}{2}}}\right|^4 rdr -ca^4  \int_{\sqrt{R}}^{R} \left(\frac{1}{r}\right)^4 rdr \\
	&= C a^4 \int_{\sqrt{R}}^{R} \left|\cos(r - \frac{m+1}{2}\pi - \frac{\pi}{4})\right|^4 \frac{1}{r}dr
	-ca^4  \int_{\sqrt{R}}^{R} \frac{1}{r^3} dr \\
	&\geq a^4 (C\log R - cR^{-2} )\\
	&\geq C \frac{\log R}{R^2}.
\end{align*}
This completes the proof. 
\end{proof}


\appendix

\section{Remark on the resonance}
\label{app;A}

In this appendix, we investigate the properties of the linear operator
\begin{equation*}
	\mathcal{L} \coloneqq  \begin{pmatrix} - \frac{1}{2} \Delta & \nu D^{-} \\  -\nu D^{+} &  - \frac{1}{2} \Delta  \end{pmatrix},
\end{equation*}
which is the self-adjoint operator on $L^2(\mathbb{R}^2) \times L^2(\mathbb{R}^2)$. 

\begin{proposition}
We have
\begin{equation*}
	\sigma(\mathcal{L}) = \sigma_{\mathrm{ess}}(\mathcal{L})=[-\frac{1}{2}\nu^2 , \infty). 
\end{equation*}
Moreover, $\mathcal{L}$ does not have eigenvalues. 
\end{proposition}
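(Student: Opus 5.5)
We diagonalize $\mathcal{L}$ with the Fourier transform. With $\widehat{f}(\xi)=\int e^{-iz\cdot\xi}f\,dz$ and $\xi=(\xi_1,\xi_2)$, we have $\widehat{D^{\pm}f}=i(\xi_1\pm i\xi_2)\widehat{f}$ and $\widehat{-\frac12\Delta f}=\frac12|\xi|^2\widehat f$. Hence $\mathcal{L}$ is unitarily equivalent to multiplication by the Hermitian matrix symbol
\begin{equation*}
	\mathcal{L}(\xi)=\begin{pmatrix} \frac12|\xi|^2 & i\nu(\xi_1-i\xi_2) \\ -i\nu(\xi_1+i\xi_2) & \frac12|\xi|^2\end{pmatrix}.
\end{equation*}
Hermiticity follows because the off-diagonal entries are complex conjugates of each other: $\overline{i\nu(\xi_1-i\xi_2)}=-i\nu(\xi_1+i\xi_2)$. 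This also gives self-adjointness on the domain $H^2\times H^2$. The eigenvalues of $\mathcal{L}(\xi)$ are
\begin{equation*}
	\mu_\pm(\xi)=\tfrac12|\xi|^2\pm\nu|\xi|,
\end{equation*}
since the off-diagonal entry has modulus $\nu|\xi|$. The associated spectral projections $P_\pm(\xi)$ are smooth on $\xi\neq0$ and homogeneous of degree $0$. Therefore $\mathcal{L}$ is unitarily equivalent to multiplication by $\mu_+$ on $\mathrm{Ran}\,P_+$ direct sum multiplication by $\mu_-$ on $\mathrm{Ran}\,P_-$, each acting on $L^2(\mathbb{R}^2)$-valued functions of $\xi$.

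The spectrum of a multiplication operator by a continuous function is the closure of its range. Here $\mu_+$ has range $[0,\infty)$. The function $\mu_-(\xi)=\frac12(|\xi|-\nu)^2-\frac12\nu^2$ has range $[-\frac12\nu^2,\infty)$, with the minimum attained on the circle $|\xi|=\nu$; this circle is where the Bessel resonance $J_m(\nu r)$ lives. Hence $\sigma(\mathcal{L})=[-\frac12\nu^2,\infty)$.

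To show that there are no eigenvalues, note that $\lambda$ is an eigenvalue of a multiplication operator iff the level set $\{\xi:\mu_\pm(\xi)=\lambda\}$ has positive Lebesgue measure. The functions $\mu_\pm$ depend only on $|\xi|$ and are non-constant polynomials in $|\xi|$, so every level set is a finite union of circles (or a single point), which has measure zero. Consequently there are no eigenvalues. The spectrum then has no isolated points and no eigenvalues at all, so no eigenvalues of finite multiplicity, and therefore $\sigma=\sigma_{\mathrm{ess}}$.

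The main point requiring care is the eigenvalue claim for the matrix-valued symbol. The direct argument is this: if $\mathcal{L}(\xi)\widehat U(\xi)=\lambda\widehat U(\xi)$ a.e., then $\widehat U(\xi)\neq0$ forces $\lambda\in\{\mu_+(\xi),\mu_-(\xi)\}$. Hence the support of $\widehat U$ lies in the null set $\mu_+^{-1}(\lambda)\cup\mu_-^{-1}(\lambda)$, so $\widehat U=0$ a.e. and $U=0$. Everything else is routine Fourier analysis.
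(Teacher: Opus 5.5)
Your proof is correct and follows the paper's route: Fourier transform, diagonalization of the Hermitian symbol with eigenvalues $\frac12|\xi|^2\pm\nu|\xi|$, and reading off the spectrum from their ranges. You exclude eigenvalues as the paper does, since $\widehat U$ would have to be supported on the null set where $\lambda$ equals one of these eigenvalues, and your remark that having no eigenvalues rules out discrete spectrum (hence $\sigma=\sigma_{\mathrm{ess}}$) supplies a step the paper only asserts.
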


\begin{proof}
By the Fourier transform, the symbol of $\mathcal{L}$ is given by
\begin{equation*}
	L(\xi) \coloneqq  \begin{pmatrix} \frac{1}{2} |\xi|^2 & \nu i (\xi_{x}- i \xi_{y}) \\ -\nu i (\xi_{x}+ i \xi_{y}) &  \frac{1}{2} |\xi|^2 \end{pmatrix},
\end{equation*}
where $\xi_{x}$ and $\xi_{y}$ are Fourier frequencies of $x$ and $y$, respectively, and $\xi=(\xi_{x},\xi_{y})$. 
The eigenvalues of $L(\xi)$ are $\frac{1}{2}|\xi|^2 \pm \nu |\xi|$. The diagonalized matrix
\begin{equation*}
	\begin{pmatrix}  \frac{1}{2}|\xi|^2 + \nu |\xi| & 0 \\ 0& \frac{1}{2}|\xi|^2 - \nu |\xi|\end{pmatrix}
\end{equation*}
is given by the unitary matrix
\begin{equation*}
	U \coloneqq  \frac{1}{|\xi|} \begin{pmatrix} i (\xi_{x}- i \xi_{y}) & |\xi| \\ |\xi| & i (\xi_{x}+ i \xi_{y})\end{pmatrix}.
\end{equation*}
It is easy to see that 
\begin{equation*}
	\min_{|\xi| \in (0,\infty)} \left(\frac{1}{2}|\xi|^2 - \nu |\xi|\right) = -\frac{1}{2}\nu^2. 
\end{equation*}
This gives us $\sigma(\mathcal{L}) = \sigma_{\mathrm{ess}}(\mathcal{L})=[-\frac{1}{2}\nu^2 , \infty)$. 

If we suppose that $\mathcal{L}$ has an eigenvalue $\mu$, then there exists a non-zero function $F=(f_{+},f_{-}) \in L^2(\mathbb{R}^2) \times L^2(\mathbb{R}^2)$ such that 
\begin{equation*}
	(\mathcal{L} -  \mu I) F =0.
\end{equation*}
By the Fourier transform and the diagonalization, we should have
\begin{equation*}
	\begin{cases}
	(\frac{1}{2}|\xi|^2 + \nu |\xi| - \mu) \widehat{\widetilde{f}_{+}}=0, \\
	(\frac{1}{2}|\xi|^2 - \nu |\xi| - \mu) \widehat{\widetilde{f}_{-}}=0,
	\end{cases}
\end{equation*}
where $\widetilde{F} \coloneqq  (\widetilde{f}_{+},\widetilde{f}_{-})= \mathcal{F}^{-1} U \mathcal{F}F$. These equations imply $\widetilde{F}=0$. This is contradiction. 
\end{proof}

The following proposition can be checked by a direct calculation. 

\begin{proposition}
For any $k=(k_1,k_2) \in \mathbb{R}^2$, it holds that
\begin{equation*}
	\mathcal{L} \begin{pmatrix} 1 \\ \frac{\pm |k|}{ik_1 +k_2}  \end{pmatrix}e^{ik\cdot (x,y)} 
	=\left(\frac{1}{2}|k|^2 \pm \nu |k|\right) \begin{pmatrix} 1 \\ \frac{\pm |k|}{ ik_1 +k_2}  \end{pmatrix}e^{ik\cdot (x,y)} 
\end{equation*}
\end{proposition}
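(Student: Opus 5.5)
The plan is a direct symbol computation on plane waves: write $E_k \coloneqq e^{ik\cdot(x,y)}$ and $c_\pm \coloneqq \pm|k|/(ik_1+k_2)$. We tacitly take $k\neq 0$, since otherwise $c_\pm$ is undefined; for $k=0$ the statement is read with the trivial constant vector. The operators in $\mathcal{L}$ act on $E_k$ by multiplication:
\begin{equation*}
\partial_x E_k = ik_1E_k, \qquad \partial_y E_k = ik_2 E_k,
\end{equation*}
so that
\begin{equation*}
-\tfrac12\Delta E_k=\tfrac12|k|^2E_k,\qquad D^{-}E_k=(ik_1+k_2)E_k,\qquad D^{+}E_k=(ik_1-k_2)E_k .
\end{equation*}
This is consistent with the symbol $L(\xi)$ from the preceding proposition. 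Hence $\mathcal{L}\,(1,c_\pm)^{T}E_k$ equals $L(k)(1,c_\pm)^{T}E_k$, and the claim reduces to checking that $(1,c_\pm)^T$ is an eigenvector of a $2\times 2$ matrix.

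\textbf{First component.} We get $\tfrac12|k|^2+\nu(ik_1+k_2)c_\pm=\tfrac12|k|^2\pm\nu|k|$, because the factor $ik_1+k_2$ cancels directly.

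\textbf{Second component.} We get $-\nu(ik_1-k_2)+\tfrac12|k|^2c_\pm$. We must show this equals $(\tfrac12|k|^2\pm\nu|k|)c_\pm$, which reduces to the identity
\begin{equation*}
-(ik_1-k_2)=\pm|k|\,c_\pm=\frac{|k|^2}{ik_1+k_2}.
\end{equation*}
This follows from $(ik_1-k_2)(ik_1+k_2)=-k_1^2-k_2^2=-|k|^2$.

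The only point needing care is this last algebraic identity, together with the sign conventions for $D^{\pm}$ under the Fourier convention $e^{ik\cdot z}$. There is no genuine obstacle beyond that. As a remark, taking $|k|=\nu$ with the minus sign gives eigenvalue $-\tfrac12\nu^2$, the bottom of $\sigma_{\mathrm{ess}}(\mathcal{L})$. Averaging these plane waves over the circle $|k|=\nu$ with weight $e^{im\varphi}$ reproduces, via the Jacobi--Anger expansion, the Bessel-type resonance $(e^{im\theta}J_m(\nu r),-e^{i(m+1)\theta}J_{m+1}(\nu r))$ used in Lemma \ref{lem;lin}, up to constants.
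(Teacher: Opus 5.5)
Your proof is correct and is exactly the direct calculation the paper intends: $-\frac12\Delta$ and $D^{\pm}$ act on $e^{ik\cdot(x,y)}$ as multiplication by $\frac12|k|^2$ and $ik_1\mp k_2$, and the second component then reduces to the identity $(ik_1-k_2)(ik_1+k_2)=-|k|^2$. Your caveat that $k\neq 0$ is needed for the vector to be defined is a fair correction to the statement as written.
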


This planar wave is so-called resonance when $|k|=\nu$. This resonance is related to the stationary solutions of \eqref{NLSSO} as follows. 
Let us consider the linear equation
\begin{equation*}
	\begin{cases}
	D^{+}\varphi_{+} - \nu \varphi_{-}=0, \\
	D^{-}\varphi_{-} + \nu \varphi_{+}=0,
	\end{cases}
\end{equation*}
which appears in the linear part of the energy. 
The function 
\begin{equation*}
	\begin{pmatrix} \varphi_{+} \\ \varphi_{-} \end{pmatrix}
	= \begin{pmatrix} 1 \\ \frac{ik_x - k_y}{\nu}  \end{pmatrix} e^{ik\cdot (x,y)} 
\end{equation*}
with $|k|=\nu$ is the solution to this equation. Let $k_x +i k_y= \nu e^{i\phi}$ and $(r,\theta)$ be the polar coordinate of $(x,y)$. 
Then $\frac{ik_x - k_y}{\nu} = i e^{i\phi}$ and $e^{ik\cdot (x,y)} = e^{i\nu r \cos (\theta - \phi)}$. 
Applying $e^{\frac{1}{2}z(t-\frac{1}{t})} = \sum_{n \in \mathbb{Z}} J_{n}(z)t^n$ as $z=\nu r$ and $t=ie^{i(\theta-\phi)}$, 
we obtain
\begin{equation*}
	e^{i\nu r \cos (\theta - \phi)} = \sum_{m \in \mathbb{Z}} i^m e^{im(\theta - \phi)} J_{m}(\nu r).
\end{equation*}
Therefore, it holds that
\begin{align*}
	\begin{pmatrix} 1 \\ \frac{ik_x - k_y}{\nu}  \end{pmatrix} e^{ik\cdot (x,y)} 
	&= \begin{pmatrix} \sum_{m \in \mathbb{Z}} i^m e^{im(\theta - \phi)} J_{m}(\nu r) \\ \sum_{m \in \mathbb{Z}}  i^{m+1} e^{i(m-1)(\theta - \phi)} e^{i\theta}J_{m}(\nu r)  \end{pmatrix} \\
	&= \begin{pmatrix} \sum_{m \in \mathbb{Z}} i^m e^{im(\theta - \phi)} J_{m}(\nu r) \\ \sum_{m \in \mathbb{Z}}  -i^{m} e^{im(\theta - \phi)} e^{i\theta}J_{m+1}(\nu r) \end{pmatrix} \\
	&= \sum_{m \in \mathbb{Z}} i^me^{-m\phi} \begin{pmatrix} e^{im\theta} J_{m}(\nu r) \\   -e^{i(m+1)\theta}J_{m+1}(\nu r) \end{pmatrix}.
\end{align*}
That is why we apply $(e^{im\theta} J_{m}(\nu r), -e^{i(m+1)\theta}J_{m+1}(\nu r))$ to show Lemma \ref{lem;lin}.

\begin{dataavailability}
There is no available data related to this work. 
\end{dataavailability}

\begin{conflict}
The author declares that there are no conflicts of interest.
\end{conflict}

\begin{acknowledgement}
Supported by KAKENHI Grant-in-Aid for Early-Career Scientists No. JP24K16947. The author would like to thank Professor Noriyoshi Fukaya (The University of Shiga Prefecture) for teaching helpful references. 
He also express gratitude to Professor Haruya Mizutani (The University of Osaka) and Professor Yohei Yamazaki (Yamaguchi University) for valuable discussions. 
\end{acknowledgement}


\end{document}